\renewcommand*{\backref}[1]{}
\renewcommand*{\backrefalt}[4]{%
    \ifcase #1 (Not cited.)%
    \or        (Cited on page~#2.)%
    \else      (Cited on pages~#2.)%
    \fi}
\def \N {\mathbb{N}}
\def \R {\mathbb{R}}
\def \e {\varepsilon}
\renewcommand{\epsilon}{\varepsilon}
\theoremstyle{definition}
\newtheorem{definition}{Definition}[section]
\theoremstyle{plain}
\newtheorem{theorem}[definition]{Theorem}
\newtheorem{lemma}[definition]{Lemma}
\newtheorem{corollary}[definition]{Corollary}
\renewcommand{\leq}{\leqslant}
\renewcommand{\ge}{\geqslant}
\renewcommand{\le}{\leqslant}
\numberwithin{equation}{section}
\newcommand{\vertiii}[1]{{\left\vert\kern-0.25ex\left\vert\kern-0.25ex\left\vert #1 
    \right\vert\kern-0.25ex\right\vert\kern-0.25ex\right\vert}}
\begin{document}

\begin{abstract} 
This article investigates a mathematical model for bushfire propagation, focusing on the existence and properties of translating solutions. We obtain quantitative bounds on the environmental diffusion coefficient and ignition kernels, identifying conditions under which fires either propagate across the entire region or naturally extinguish. 

Our analysis also reveals that vertically translating solutions do not exist, whereas traveling wave solutions with a front moving at any prescribed velocity always exist for kernels that are either of mild intensity or short range. These traveling waves exhibit unbounded
profiles. 
\end{abstract}
 
\title[Self-sustaining traveling fronts for bushfires]{Self-sustaining traveling fronts \\ for a model related to bushfires}

\author[S. Dipierro]{Serena Dipierro}
\address{S. D., 
Department of Mathematics and Statistics,
University of Western Australia,
35~Stirling Highway, WA 6009 Crawley, Australia. }

\email{serena.dipierro@uwa.edu.au}

\author[E. Valdinoci]{Enrico Valdinoci}
\address{E. V., 
Department of Mathematics and Statistics,
University of Western Australia,
35~Stirling Highway, WA 6009 Crawley, Australia. }

\email{enrico.valdinoci@uwa.edu.au}

\author[G. Wheeler]{Glen Wheeler}
\address{G. W.,
School of Mathematics and Applied Statistics,
University of Wollongong,
Northfields Avenue, NSW 2500 Wollongong, Australia. }

\email{glenw@uow.edu.au}

\author[V.-M. Wheeler]{Valentina-Mira Wheeler}
\address{V.-M. W.,
School of Mathematics and Applied Statistics,
University of Wollongong,
Northfields Avenue, NSW 2500 Wollongong, Australia. }

\email{vwheeler@uow.edu.au}

\thanks{Supported by Australian Research Council DP250101080 and DE190100379.}

\subjclass[2020]{35C07; 35K10; 45H05; 92-10.}

\keywords{bushfire models; evolution equation; moving fronts; traveling waves.}

\maketitle

\section{Introduction} 
Understanding the dynamics of bushfire propagation is crucial for predicting
fire behavior and implementing effective mitigation strategies. Mathematical
modeling provides a rigorous framework to analyze the interplay between
environmental factors and fire spread. This study focuses on a mathematical
model describing bushfire propagation, with particular emphasis on the
existence and properties of self-similar solutions. Typically, in nonlinear
dynamics, self-sustaining structures play a fundamental role in understanding
long-term behavior and pattern formation. Their simple structure and
predictability is often helpful to describe persistent phenomena.

Specifically, in bushfire modeling, translating solutions represent
steady-state firefronts moving through an environment and the analysis of these
solutions is crucial to identify conditions under which a fire sustains itself
or extinguishes. {F}rom the phenomenological standpoint,
these patterns can also help to quantify how parameters like diffusion,
ignition thresholds, and fuel availability influence fire spread.

In this paper, by deriving quantitative bounds on the environmental diffusion coefficient and ignition kernels, we establish conditions that dictate whether a fire will sustain itself indefinitely or eventually extinguish. Our findings demonstrate that while vertically translating solutions (which correspond physically to fires whose temperature is a linear function of time)
are not possible, traveling wave solutions exist for any prescribed velocity, exhibiting divergent profiles. 

These results contribute to the broader understanding of fire dynamics by offering precise conditions for sustained propagation and extinction. They also provide insights into the robustness of firefronts under localized disturbances, which has direct implications for fire management and predictive modeling in real-world scenarios.\medskip

Diving into the specific features\footnote{We stress that many bushfire models are available in the literature, each with specific characteristics and different ranges of applicability. Providing a full account of all possible models is beyond the scope of this work. Clearly, when proving a precise mathematical statement, it is not feasible to treat all conceivable models simultaneously. Accordingly, we follow the standard practice of focusing on a single model, namely the one introduced in~\cite{PAPER1}. To the best of our knowledge, the present paper constitutes the first thorough analysis of self-similar solutions in the bushfire context.}
 of this work, we
use here as our primary tool the model recently proposed in~\cite{PAPER1}.
First, we consider some straightforward tests of the model motivated by physical intuition.
	For instance, a fire (solution to the model) should not spontaneously ignite without being hot enough for ignition to begin, either from the initial condition or from the boundary condition -- that this is indeed the case is confirmed by Lemma~\ref{NECE}.
	The main tool we use for proving this, which is interesting in its own right, is a comparison principle or ordering property of solutions (Lemma~\ref{COMPAPI}).
	Further intuitive knowledge of fire behavior includes the complete spread of a fire from a single point or the complete suppression of a fire depending on the balance of diffusion parameters and inition condition.
	This is the subject of Theorem~\ref{SOPTH} and Theorem~\ref{SOPTH:2} respectively.
	
	Next, we consider the important operational question of the propagation speed of a fire front ignited at a spatial boundary. This information may be useful for estimating evacuation times. We provide qualitative estimates of the propagation speed in Theorem~\ref{DBDARYEFF}.
	
	Finally, we systematically study solutions that take a relatively simple shape, or whose dynamics are straightforward to describe.
	For example, fires that move in a given direction without changing shape, or whose temperature only increases constantly with respect to time.
	We show the non-existence of fires that steadily increase in temperature (vertical translating solutions, Theorem~\ref{ABSE}),
	non-existence of fires that move self-similarly (Theorem~\ref{sojdcmvvb-rw-SeeDeqwdf}), 
	and the existence of infinite fire waves moving through a region (traveling wave solutions, Theorems~\ref{TRAVE} and~\ref{TRAVE.BIS}).
	The waves are always unbounded (as shown in Theorem~\ref{ABBOU}).
	
	In the next section we state precisely these results and give a brief discussion for each of them.

\section*{Funding declaration}

All authors were partially supported by ARC (Australian Research Council) DP250101080, and the fourth author additionally partially supported by ARC DE190100379.
They are grateful for this support.

\section{Main results}
\subsection{Mathematical framework and consistency checks}
Suppose that the environmental temperature~$u$ at a spatial location~$x$ at
time~$t$ is described by the evolution equation
\begin{equation}\label{MAIN:EQ:pa3}\begin{split}
\partial_t u(x,t)=c\Delta u(x,t)+\int_{\Omega}\big(u(y,t)-\Theta\big)_+\,K(x,y)\,dy,\end{split}
\end{equation}
where~$c\in(0,+\infty)$ is the diffusion coefficient and~$\Theta\in\R$ the ignition temperature.\footnote{{Up to replacing~$u$
with~$u-\Theta$, the parameter~$\Theta$ can be scaled away, thereby simplifying equation~\eqref{MAIN:EQ:pa3}. In this formulation, the ignition temperature becomes zero. However, throughout this paper we retain the parameter~$\Theta$, both to avoid confusion and for consistency with other works, where the ignition temperature may depend on time and position and may not be smooth, making such a scaling less convenient.}}
Here and throughout the paper, we use the positive part notation: for every~$r\in\R$, we let~$r_+:=\max\{r,0\}$.

We assume that the interaction kernel~$K$ is nonnegative and integrable, with
\begin{equation}\label{UNIKe}
\sup_{x\in\Omega}\int_{\Omega}K(x,y)\,dy<+\infty.\end{equation}

We suppose that the evolution equation~\eqref{MAIN:EQ:pa3} takes place in a bounded and smooth domain~$\Omega\subset\R^n$, with a given initial condition~$u(x,0)$ at time~$t=0$
and a given Dirichlet datum, possibly depending on time~$t$, assigned along~$\partial\Omega$.

For simplicity, we focus\footnote{Under natural assumptions, equation~\eqref{MAIN:EQ:pa3}
admits classical solutions, see~\cite{MR4968074}. For example, for bounded kernels, analytic semigroup theory (see e.g.~\cite[Theorems~3.3.3 and~3.3.4]{MR610244}) gives the existence of mild solutions, and, for smooth kernels, one can obtain classical solutions by differentiating the equation (see e.g.~\cite[Theorem~3.1 on page~196]{MR710486}
 and~\cite[Theorem~6 on page~65]{MR181836}). Another interesting case occurs when~$K$ is the Green Function of the Laplace operator, since, defining
$$U(x):=\int_{\Omega} (u(y, t) - \Theta)_+ K(x, y) \, dy,$$
this setting corresponds to the parabolic-elliptic system 
$$\begin{cases} \partial_t u = c\Delta u + U \\ -\Delta U = (u - \Theta)_+, \end{cases}$$
which arises in mathematical biology and chemotaxis.} here on classical solutions, namely we suppose that for every~$x\in\Omega$ the map~$(0,+\infty)\ni t\mapsto u(x,t)$ is continuously differentiable, that
for every~$t\in(0,+\infty)$ the map~$\Omega\ni x\mapsto u(x,t)$ is twice continuously differentiable, and that~$u$ is continuous in~$\overline\Omega\times[0,+\infty)$.

A useful observation is that solutions of~\eqref{MAIN:EQ:pa3} satisfy a natural ordering property:

\begin{lemma}[Comparison Principle] \label{COMPAPI}
Let~$u$ and~$v$ be such that
\begin{equation}\label{MAIN:EQ:pa3:sSop}\begin{split}
&\partial_t u(x,t)\le c\Delta u(x,t)+\int_{\Omega}\big(u(y,t)-\Theta\big)_+\,K(x,y)\,dy\\
{\mbox{and }}\quad&\partial_t v(x,t)\ge c\Delta v(x,t)+\int_{\Omega}\big(v(y,t)-\Theta\big)_+\,K(x,y)\,dy
\end{split}
\end{equation}
in~$\Omega\times(0,+\infty)$,
with~$u(x,0)\le v(x,0)$ for all~$x\in\Omega$ and~$u(x,t)\le v(x,t)$ for all~$x\in\partial\Omega$ and~$t\in[0,+\infty)$.

Then, $u(x,t)\le v(x,t)$ for all~$x\in\Omega$ and~$t\in[0,+\infty)$.
\end{lemma}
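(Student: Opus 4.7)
The plan is a first-touching-time argument applied to a suitably shifted difference of $u$ and $v$. Setting $w := u - v$ and subtracting the two inequalities in~\eqref{MAIN:EQ:pa3:sSop}, I would exploit that the map $s \mapsto (s - \Theta)_+$ is nondecreasing and $1$-Lipschitz to obtain the pointwise bound $(u(y,t) - \Theta)_+ - (v(y,t) - \Theta)_+ \leq (u(y,t) - v(y,t))_+ = w_+(y,t)$. This yields the differential inequality
\begin{equation*}
\partial_t w(x,t) \leq c\Delta w(x,t) + \int_{\Omega} w_+(y,t)\, K(x,y)\,dy
\end{equation*}
on $\Omega \times (0,+\infty)$, together with $w(\cdot,0) \leq 0$ in $\Omega$ and $w \leq 0$ on $\partial\Omega \times [0,+\infty)$.

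To strictify this inequality and leverage~\eqref{UNIKe}, I would then set $\tilde w := w - \epsilon e^{\mu t}$ for a small $\epsilon > 0$ and a constant $\mu > M := \sup_{x \in \Omega}\int_{\Omega} K(x,y)\,dy$. Plugging $w_+ \leq \tilde w_+ + \epsilon e^{\mu t}$ into the previous inequality produces a residual of the form $\epsilon e^{\mu t}\bigl(\int_{\Omega} K(x,y)\,dy - \mu\bigr)$, which is strictly negative by the choice of $\mu$, and upgrades the inequality to the strict version
\begin{equation*}
\partial_t \tilde w(x,t) < c\Delta \tilde w(x,t) + \int_{\Omega} \tilde w_+(y,t)\, K(x,y)\,dy,
\end{equation*}
while $\tilde w(\cdot,0) \leq -\epsilon < 0$ in $\Omega$ and $\tilde w \leq -\epsilon e^{\mu t} < 0$ on $\partial\Omega \times [0,+\infty)$.

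Arguing by contradiction, I would suppose that $\tilde w$ becomes positive somewhere and let $T^* > 0$ be the first time at which $\sup_{\overline\Omega} \tilde w(\cdot, T^*) = 0$; strict negativity on the parabolic boundary together with the continuity of $\tilde w$ forces $T^* > 0$ and ensures that the supremum is attained at some interior point $x_0 \in \Omega$. At $(x_0, T^*)$ the usual extremality relations give $\Delta \tilde w(x_0, T^*) \leq 0$ and $\partial_t \tilde w(x_0, T^*) \geq 0$, and moreover $\tilde w_+(\cdot, T^*) \equiv 0$ on $\Omega$ by minimality of $T^*$. Substituting into the strict inequality yields $0 \leq \partial_t \tilde w(x_0, T^*) < c\Delta \tilde w(x_0, T^*) \leq 0$, a contradiction. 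Hence $\tilde w \leq 0$ throughout $\Omega \times [0,+\infty)$, and letting $\epsilon \to 0^+$ delivers $u \leq v$.

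The main obstacle, and the reason for introducing the exponential shift, is precisely the nonlocal reaction term: a direct maximum principle applied to $w$ would yield only $\partial_t w \leq c\Delta w$ at a putative first touching point (since $w_+ \equiv 0$ there as well), producing $\partial_t w = \Delta w = 0$ but no contradiction. The strictification made possible by the uniform kernel bound~\eqref{UNIKe} and the choice $\mu > M$ is exactly what defeats the interior-extremum relations and closes the argument.
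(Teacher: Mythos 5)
Your proof is correct, and while it is the same genre of argument as the paper's (a strictified first-touching-point maximum principle), it differs in two worthwhile ways. First, you reduce at the outset to a single differential inequality for $w=u-v$ via the bound $(u-\Theta)_+-(v-\Theta)_+\le (u-v)_+$; the paper instead keeps both functions and performs the equivalent monotonicity/Lipschitz estimates on the nonlocal term inside the contradiction argument. Second, and more substantively, your barrier is $\epsilon e^{\mu t}$ with $\mu$ strictly exceeding the kernel mass $M$ from~\eqref{UNIKe}, so the residual $\epsilon e^{\mu t}\bigl(\int_\Omega K(x,y)\,dy-\mu\bigr)$ is negative uniformly for all $t\ge 0$ and the touching-point argument closes in one step on $[0,+\infty)$. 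The paper uses the linear barrier $\delta+\delta A t$, whose favorable sign only survives on the short window $t\le 1/A$, forcing a restriction to $[0,1/A]$ followed by an induction that restarts the argument on successive intervals; your exponential weight removes that iteration entirely. The only points worth writing out in full are the ones you already flag implicitly: that $t\mapsto\sup_{\overline\Omega}\tilde w(\cdot,t)$ is continuous (so the first touching time exists, is positive, and the supremum at that time is attained in the interior, where the equation and the assumed $C^1$-in-$t$, $C^2$-in-$x$ regularity are available), and that $\tilde w(\cdot,T^*)\le 0$ on all of $\Omega$ kills the convolution term. Both are standard and your sketch handles them adequately.
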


For practical purposes, it is also useful to distinguish situations in which a fire takes place.
To this end, we say that a solution of~\eqref{MAIN:EQ:pa3} is ``burning'' \label{BURSO}
if it takes at least some values above the ignition temperature,
i.e. there exist~$x_0\in\Omega$ and~$t_0\in[0,+\infty)$ such that~$u(x_0,t_0)>\Theta$.

As a consistency check, which follows as a byproduct of the Comparison Principle in Lemma~\ref{COMPAPI},
let us point out that burning solutions can only be produced by burning initial or boundary data:

\begin{lemma}[Necessity of the initial ignition] \label{NECE}
Let~$u$ be a solution of~\eqref{MAIN:EQ:pa3} such that~$u(x,0)\le\Theta$ for all~$x\in\Omega$ and~$u(x,t)\le\Theta$ for all~$x\in\partial\Omega$ and~$t\in[0,+\infty)$.

Then, $u(x,t)\le\Theta$ for all~$x\in\Omega$ and~$t\in[0,+\infty)$.
\end{lemma}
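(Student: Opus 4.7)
The plan is to reduce the statement to an immediate application of the Comparison Principle (Lemma~\ref{COMPAPI}) by choosing the constant function $v(x,t):=\Theta$ as a competitor that bounds $u$ from above.

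First I would verify that the constant $v\equiv\Theta$ satisfies the second inequality in~\eqref{MAIN:EQ:pa3:sSop}. Since $\partial_t v=0$ and $\Delta v=0$, and since $(v(y,t)-\Theta)_+=(\Theta-\Theta)_+=0$ makes the integrand identically zero (this is precisely where the positive-part nonlinearity on the right-hand side of~\eqref{MAIN:EQ:pa3} is exploited), both sides of the equation vanish, so $v$ satisfies the PDE with equality, and in particular with the inequality~$\ge$ required of a super-solution. Meanwhile, the function $u$ is a classical solution of~\eqref{MAIN:EQ:pa3}, so it automatically satisfies the first inequality in~\eqref{MAIN:EQ:pa3:sSop} with equality, hence also with~$\le$, qualifying it as a sub-solution in the sense of Lemma~\ref{COMPAPI}.

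Next, the initial and boundary hypotheses of the present lemma translate directly into the hypotheses of the Comparison Principle: by assumption $u(x,0)\le\Theta=v(x,0)$ for all $x\in\Omega$, and $u(x,t)\le\Theta=v(x,t)$ for all $x\in\partial\Omega$ and all $t\ge 0$. Applying Lemma~\ref{COMPAPI} therefore yields $u(x,t)\le v(x,t)=\Theta$ for every $x\in\Omega$ and $t\in[0,+\infty)$, which is the desired conclusion.

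There is essentially no obstacle here: the proof is purely structural, and the only point worth flagging is that the truncation $(\,\cdot\,-\Theta)_+$ in the nonlocal ignition term is exactly what makes the threshold value $\Theta$ a stationary solution of the full equation, which is what lets the Comparison Principle pin $u$ beneath it.
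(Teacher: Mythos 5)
Your proposal is correct and coincides with the paper's own argument: the paper likewise takes the constant function $v\equiv\Theta$, notes that the positive part makes it an exact solution of~\eqref{MAIN:EQ:pa3}, and concludes by the Comparison Principle of Lemma~\ref{COMPAPI}. No gaps.
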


\subsection{Fire invasion or extinction}
The Comparison Principle in Lemma~\ref{COMPAPI} possesses further interesting practical consequences,
as showcased\footnote{As customary, $B_r$ denotes
the ball of radius~$r$ centered at the origin.}
by the next result:

\begin{theorem}[Description of a fire invading the whole region]\label{SOPTH}
Let
\begin{equation}\label{CPDFF:DIZ}
\lambda_0>\Theta>0.\end{equation}
Assume that, for all~$r>0$,
\begin{equation}\label{CPDFF:DIZ0} \inf_{x\in B_1}
\int_{B_r}K(x,y)\,dy>0.\end{equation}

Let~$u$ be a solution of 
\begin{equation}\label{fuwekkfg12345eajhdgahks}
\partial_t u(x,t)=c\Delta u(x,t)+\int_{B_1}\big(u(y,t)-\Theta\big)_+\,K(x,y)\,dy,\end{equation}
with~$u(x,t)=0$ for all~$x\in\partial B_1$ and~$t\in[0,+\infty)$.

Suppose that
\begin{equation}\label{CPDFF:DIZ2}u(x,0)\ge
\lambda_0 (1-|x|^2)
.\end{equation}

Then, there exists~$\overline{c}>0$,
depending only on~$n$, $\lambda_0 $, $\Theta$, and~$K$, such that, if
\begin{equation}\label{CPDFF:DIZ2.b}c\le \overline{c},\end{equation}
we have that,
for all~$x\in B_1$,
\begin{equation}\label{CPDFF:DIZ2.c}\lim_{t\to+\infty}u(x,t)=+\infty.\end{equation}

More precisely, under the above assumptions,
there exists~$\alpha>0$, depending only on~$\lambda_0 $, $\Theta$, and~$K$,
such that, for all~$x\in B_1$ and~$t\in[0,+\infty)$,
\begin{equation}\label{CPDFF:DIZ2.d} u(x,t)\ge\lambda_0 e^{\alpha t} (1-|x|^2).\end{equation}
\end{theorem}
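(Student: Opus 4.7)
The plan is to apply the Comparison Principle (Lemma~\ref{COMPAPI}) with the explicit subsolution
\[
\underline u(x,t) := \lambda_0 \, e^{\alpha t}\,(1-|x|^2),
\]
where $\alpha>0$ and $\overline c>0$ are to be chosen sufficiently small. Since $\underline u$ vanishes on $\partial B_1$ at every time, and $\underline u(\cdot,0)=\lambda_0 (1-|x|^2)\le u(\cdot,0)$ by~\eqref{CPDFF:DIZ2}, the initial and boundary orderings demanded by Lemma~\ref{COMPAPI} come for free; all the real work lies in checking that $\underline u$ is a subsolution of~\eqref{fuwekkfg12345eajhdgahks}. Once this is established, Lemma~\ref{COMPAPI} yields the quantitative bound~\eqref{CPDFF:DIZ2.d}, and letting $t\to+\infty$ at any fixed $x\in B_1$ produces the divergence~\eqref{CPDFF:DIZ2.c}.

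To verify the subsolution inequality, a direct computation gives
\[
\partial_t\underline u(x,t) - c\Delta\underline u(x,t) = \bigl[\alpha(1-|x|^2)+2nc\bigr]\lambda_0 e^{\alpha t} \le (\alpha + 2nc)\,\lambda_0 e^{\alpha t}
\]
uniformly for $x\in B_1$. To estimate the ignition integral from below, set $r_0:=\sqrt{1-\Theta/\lambda_0}$, which is strictly positive by~\eqref{CPDFF:DIZ}, and
\[
\kappa := \inf_{x\in B_1}\int_{B_{r_0/2}}K(x,y)\,dy,
\]
which is strictly positive by~\eqref{CPDFF:DIZ0} applied with $r=r_0/2$. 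For $|y|\le r_0/2$ a short algebraic manipulation using $r_0^2=1-\Theta/\lambda_0$ gives $\underline u(y,t)-\Theta\ge \tfrac34\bigl(\lambda_0 e^{\alpha t}-\Theta\bigr)>0$, so that
\[
\int_{B_1}\bigl(\underline u(y,t)-\Theta\bigr)_+ K(x,y)\,dy \;\ge\; \tfrac34\kappa\bigl(\lambda_0 e^{\alpha t}-\Theta\bigr).
\]
Comparing the two displays, the subsolution property reduces to $\lambda_0 e^{\alpha t}\bigl[\tfrac34\kappa-(\alpha+2nc)\bigr]\ge \tfrac34\kappa\Theta$; since the left-hand side is nondecreasing in $t$ as long as the bracket is nonnegative, the worst case is $t=0$, leading to the clean sufficient smallness condition
\[
\alpha + 2nc \;\le\; \tfrac34\kappa\bigl(1-\Theta/\lambda_0\bigr).
\]

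Choosing, say, $\alpha$ equal to one half of the right-hand side (which depends only on $\lambda_0 $, $\Theta$ and $K$) and then $\overline c$ so that $2n\overline c$ fits within the room that remains (hence depending additionally on $n$) completes the proof. The genuine obstacle is precisely this algebraic balancing: the exponential growth rate $\alpha$ of $\underline u$ must be fast enough to drive divergence, yet slow enough that the ignition feedback coming from $B_{r_0/2}$ can sustain it against the dissipative Laplacian contribution $c\Delta\underline u$. The hypothesis $\lambda_0>\Theta$ is exactly what keeps $r_0$ bounded away from zero, while~\eqref{CPDFF:DIZ0} is what ensures that $\kappa$ is strictly positive; if either were violated no admissible $\alpha$ would exist, in accordance with the extinction behavior expected in the complementary regime treated by Theorem~\ref{SOPTH:2}.
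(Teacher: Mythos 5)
Your proposal is correct and follows essentially the same route as the paper: the identical exponential subsolution $\lambda_0 e^{\alpha t}(1-|x|^2)$, a lower bound on the ignition integral obtained from the kernel's positivity on a small ball where the subsolution stays above $\Theta$ (your radius $r_0/2$ plays the role of the paper's $\tau=\sqrt{(\lambda_0-\Theta)/(2\lambda_0)}$), and a final smallness choice of $\alpha$ and $\overline c$ before invoking Lemma~\ref{COMPAPI}. The algebra checks out, so only cosmetic details differ from the paper's argument.
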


We emphasize that condition~$\Theta>0$
in~\eqref{CPDFF:DIZ} ensures that the boundary values of the domain remain below the ignition temperature, namely the fire is not a result of boundary effects
(to be compared with the forthcoming Theorem~\ref{DBDARYEFF}). 

Furthermore, conditions~\eqref{CPDFF:DIZ} and~\eqref{CPDFF:DIZ2} establish that~$u(0,0)=\lambda_0>\Theta$, indicating that the center of the domain exceeds the ignition temperature and thus, roughly speaking, the fire originates primarily from the center of the domain.

Moreover, condition~\eqref{CPDFF:DIZ0} stipulates that the interaction term propagating the fire is sufficiently active throughout the entire region (a condition satisfied, for instance, by all Gaussian-type interaction kernels). In contrast, condition~\eqref{CPDFF:DIZ2.b} requires that the diffusion coefficient is small enough to prevent heat from being dissipated too quickly by the environment (to be compared with
the forthcoming Theorem~\ref{SOPTH:2}).

In this scenario, the conclusion of Theorem~\ref{SOPTH}, as detailed in~\eqref{CPDFF:DIZ2.c}, indicates that the entire domain\footnote{Needless to say, 
as usual in mathematical models,
when blow-up occurs, as in~\eqref{CPDFF:DIZ2.c}, no claim of physical realism is made at extreme values. In our context, this
``blow-up in infinite time'' simply means that for any given point in the domain, there exists a time at which it burns. Since this time depends on the spatial point, appropriate care is required to formulate the statement precisely. 
Quantitative bounds, such as~\eqref{CPDFF:DIZ2.d}, are useful in these scenarios if one wishes to modify the model to describe the aftermath of the burning process, since they indicate the stage at which modifications need to be implemented.} will be engulfed in fire. More strikingly, as highlighted in~\eqref{CPDFF:DIZ2.d}, the environmental temperature will increase at an exponential rate.
\medskip

An interesting counterpart of Theorem~\ref{SOPTH} is provided by the following result:

\begin{theorem}[Description of a fire being extinguished by environmental thermal diffusion]\label{SOPTH:2} Assume that~\eqref{CPDFF:DIZ} is satisfied and that
\begin{equation}\label{ihsdcknvAS234MSsijsmdvlpsd}C:=
\sup_{x\in B_1}\int_{B_1}(1-|y|^2)\,K(x,y)\,dy<+\infty.\end{equation}

Let~$u$ be a solution of 
$$ \partial_t u(x,t)=c\Delta u(x,t)+\int_{B_1}\big(u(y,t)-\Theta\big)_+\,K(x,y)\,dy,$$
with~$u(x,t)=0$ for all~$x\in\partial B_1$ and~$t\in[0,+\infty)$.

Suppose that
\begin{equation}\label{CPDFF:DIZ2:2}u(x,0)\le
\lambda_0 (1-|x|^2)
.\end{equation}

Then, if
\begin{equation}\label{CPDFF:DIZ2.b:2}c>\frac{C}{2n},\end{equation}
we have that,
for all~$x\in B_1$,
\begin{equation}\label{CPDFF:DIZ2.c:2}\lim_{t\to+\infty}u(x,t)\le0.\end{equation}

More precisely, under the above assumptions,
for all~$x\in B_1$ and~$t\in[0,+\infty)$,
\begin{equation}\label{CPDFF:DIZ2.d:2} u(x,t)\le\lambda_0 e^{-(2nc-C) t} (1-|x|^2).\end{equation}
\end{theorem}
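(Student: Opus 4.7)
The plan is to apply the Comparison Principle (Lemma~\ref{COMPAPI}) with the explicit barrier
\[ v(x,t) := \lambda_0\, e^{-\beta t}(1-|x|^2), \qquad \beta := 2nc - C, \]
which is strictly positive thanks to~\eqref{CPDFF:DIZ2.b:2}. This choice is forced by the form of the desired bound~\eqref{CPDFF:DIZ2.d:2}. By construction $v$ vanishes on $\partial B_1$ (matching the boundary data of $u$) and $v(\cdot,0)=\lambda_0(1-|x|^2)$ dominates $u(\cdot,0)$ by~\eqref{CPDFF:DIZ2:2}. Thus, once $v$ is shown to be a supersolution of the governing equation in the sense of~\eqref{MAIN:EQ:pa3:sSop}, Lemma~\ref{COMPAPI} delivers $u\le v$ throughout $B_1\times[0,+\infty)$, which is precisely~\eqref{CPDFF:DIZ2.d:2}; then~\eqref{CPDFF:DIZ2.c:2} follows by sending $t\to+\infty$.

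The whole argument therefore reduces to verifying
\[ \partial_t v \ge c\Delta v + \int_{B_1}\bigl(v(y,t)-\Theta\bigr)_+ K(x,y)\,dy \qquad \text{in } B_1\times(0,+\infty). \]
A direct computation gives $\partial_t v = -\beta\, \lambda_0 e^{-\beta t}(1-|x|^2)$ and $c\Delta v = -2nc\,\lambda_0 e^{-\beta t}$. For the nonlocal term, I would use the pointwise inequality $(v(y,t)-\Theta)_+\le v(y,t)$, which is legitimate since $\Theta\ge 0$ by~\eqref{CPDFF:DIZ} and $v\ge 0$. Combined with~\eqref{ihsdcknvAS234MSsijsmdvlpsd}, this produces
\[ \int_{B_1}\bigl(v(y,t)-\Theta\bigr)_+ K(x,y)\,dy \le \lambda_0 e^{-\beta t}\int_{B_1}(1-|y|^2)\,K(x,y)\,dy \le C\,\lambda_0 e^{-\beta t}. \]
Substituting these expressions into the supersolution inequality and dividing by $\lambda_0 e^{-\beta t}>0$ reduces everything to the scalar statement
\[ 2nc \ge \beta(1-|x|^2) + C. \]
Since $1-|x|^2\le 1$ and $\beta = 2nc - C$, the right-hand side is at most $\beta + C = 2nc$, so the inequality holds and $v$ is indeed a supersolution.

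There is no substantial obstacle in the argument; the subtle points are structural rather than computational. One should note that the choice $\beta = 2nc - C$ is sharp within this ansatz -- equality in the reduced scalar inequality is saturated at $x=0$, which is exactly what makes the threshold $c>C/(2n)$ in~\eqref{CPDFF:DIZ2.b:2} natural and the decay rate in~\eqref{CPDFF:DIZ2.d:2} the best one can extract from this barrier. The one genuine use of the sign hypothesis $\Theta>0$ is in the pointwise estimate $(v-\Theta)_+\le v$; without $\Theta\ge 0$ one would need to shift the barrier, which would break the boundary match on $\partial B_1$.
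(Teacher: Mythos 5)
Your proposal is correct and follows essentially the same route as the paper: the same barrier $\lambda_0 e^{-(2nc-C)t}(1-|x|^2)$, the same bound $(v-\Theta)_+\le v_+$ using $\Theta>0$, the same reduction to $2nc\ge \beta(1-|x|^2)+C$, and the same application of Lemma~\ref{COMPAPI}. (You read the weight in~\eqref{ihsdcknvAS234MSsijsmdvlpsd} as $(1-|y|^2)$, which is the intended meaning and exactly what the paper's own computation uses.)
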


The significance of Theorem~\ref{SOPTH:2} lies in its provision of quantitative bounds for a fire ignited at the center of a domain to extinguish solely due to the thermal diffusivity of the environment. According to~\eqref{CPDFF:DIZ2.b:2}, this requires the thermal diffusivity to be sufficiently high relative to the interaction kernel. Note that~\eqref{ihsdcknvAS234MSsijsmdvlpsd} is automatically satisfied, for instance, if~$K$ is bounded. 

Furthermore, the conclusion of Theorem~\ref{SOPTH:2}, stated in~\eqref{CPDFF:DIZ2.c:2}, ensures that the entire domain eventually remains below the ignition temperature. In fact, as elaborated in~\eqref{CPDFF:DIZ2.d:2}, this occurs at an exponential rate.\medskip

In comparison with Theorem~\ref{SOPTH}, that describes a scenario in which the fire is initiated at the center
of the domain, it is also interesting to analyze the situation in which the fire is started at the boundary and propagates inside the domain. This is described in the following result:

\begin{theorem}[Boundary ignition]\label{DBDARYEFF}
Let
\begin{equation}\label{BDw3rA:CPDFF:D2sfIZ}
\overline\Theta>\Theta\qquad{\mbox{and}}\qquad \beta>\overline\Theta-\Theta.\end{equation}

Let also
\begin{equation}\label{CASDF:d2}
\alpha:=2nc (\overline\Theta-\Theta)\in(0,+\infty)
\end{equation}
and
\begin{equation}\label{CASDF:d3}
t_\star:=\frac{\beta-\overline\Theta+\Theta}\alpha.\end{equation}

Let~$u$ be a solution of 
\begin{equation}\label{weuoyr348ofnkdel0976}
\partial_t u(x,t)=c\Delta u(x,t)+\int_{B_1}\big(u(y,t)-\Theta\big)_+\,K(x,y)\,dy,
\end{equation}
with~$u(x,t)=\overline\Theta$ for all~$x\in\partial B_1$ and~$t\in[0,+\infty)$.

Suppose that
\begin{equation}\label{CPDFF:	qweDIZ2:BDCO}u(x,0)\ge
\overline\Theta-\beta(1-|x|^2)
.\end{equation}

Then, for all~$x\in B_1$,
\begin{equation}\label{CPDFF:DIZ2.c.bdary}
u(x,t_\star)\ge\Theta.\end{equation}

More precisely, under the above assumptions,
for all~$x\in B_1$ and~$t\in[0,t_\star]$,
\begin{equation}\label{CPDFF:DIZ2.dwafsger2} u(x,t)\ge
\overline\Theta-(\beta-\alpha t)
(1-|x|^2).\end{equation}
\end{theorem}

We point out that Theorem~\ref{DBDARYEFF} relies only on the diffusion term of equation~\eqref{weuoyr348ofnkdel0976} and remains valid\footnote{We observe that the role of condition~\eqref{CPDFF: qweDIZ2:BDCO} is solely to provide an explicit lower bound as in~\eqref{CPDFF:DIZ2.dwafsger2}. Indeed, the solution of the heat equation can be written as
$$ u(x,t)=\overline\Theta+\sum_{k=1}^{+\infty} c_k\,e^{-c\lambda_k t}\,\phi_k(x),$$
where the coefficients~$c_k$
depend on the initial condition, and~$\phi_k$
is the eigenfunction of the (negative) Dirichlet Laplacian associated with the (positive) eigenvalue~$\lambda_k$.
As a consequence, $u(x,t)$ converges to~$\overline\Theta$
as~$t\to+\infty$, regardless of the initial temperature distribution. Hence, even in the absence of~\eqref{CPDFF: qweDIZ2:BDCO}, the available region is eventually burned by the fire. This effect is further enhanced by the integral term arising from the ignition kernel, while explicit quantitative bounds can be derived from~\eqref{CPDFF: qweDIZ2:BDCO}.}
even for the heat equation (corresponding to the case~$K:=0$). This, in particular, underscores a structural difference with Theorem~\ref{SOPTH}, which instead crucially hinges on the interaction term of equation~\eqref{fuwekkfg12345eajhdgahks}
and indeed requires the diffusion coefficient to be sufficiently small.

We also observe that, on the one hand, condition~\eqref{BDw3rA:CPDFF:D2sfIZ} entails that the boundary of the
domain in Theorem~\ref{DBDARYEFF} lies above the ignition temperature (since, for~$x\in\partial B_1$,
we have that~$u(x,t)=\overline\Theta>\Theta$).

On the other hand, conditions~\eqref{BDw3rA:CPDFF:D2sfIZ}
and~\eqref{CPDFF:	qweDIZ2:BDCO}
give that initially the center of the domain is not necessarily burning
(since~$u(0,0)$ can be equal to~$\overline\Theta-\beta$, which is less than the ignition temperature~$\Theta$).

In this spirit, the conclusion obtained in~\eqref{CPDFF:DIZ2.c.bdary} states that there exists a finite interval of time~$(0,t_\star)$
during which the flame propagates from the boundary to cover the entire available region.

The specific estimate in~\eqref{CPDFF:DIZ2.dwafsger2}
also gives that the temperature growth is at least linear: in fact, this growth rate is in general not better
than linear, see footnote~\ref{SHARP-CPDFF:DIZ2.dwafsger2} on page~\pageref{SHARP-CPDFF:DIZ2.dwafsger2},
and this shows an interesting structural difference with respect to the exponentially fast
invasion of the fire obtained in equation~\eqref{CPDFF:DIZ2.d} of Theorem~\ref{SOPTH}
for fires ignited at the center of the domain.

Another interesting difference between Theorems~\ref{SOPTH} and~\ref{DBDARYEFF} is that, while fire propagation from the center of the domain requires the environmental diffusion coefficient to be sufficiently small in order to prevent the heat from dispersing throughout the habitat, the description of fire propagation from the boundary does not require this condition. In fact, when the boundary is maintained above the ignition temperature, environmental diffusion actually favors the propagation of the fire (as quantified in~\eqref{CASDF:d2},
and notice that the higher the environmental diffusion coefficient~$c$, the shorter the burning time~$t_\star$,
as made precise in~\eqref{CASDF:d3}).

\subsection{Traveling fire waves}
A concerning scenario in practical applications also arises when the temperature increases at a constant rate. This possibility is excluded by the following result:

\begin{theorem}[Absence of vertically translating solutions] \label{ABSE}
Assume that 
for every nonempty open set $U\subseteq\Omega$, there exists $x_U\in\Omega$ such that
\begin{equation}\label{NONDEKE5}
\int_U K(x_U,y) dy > 0.
\end{equation}
Then, there exists no burning solution of~\eqref{MAIN:EQ:pa3} of the form
\begin{equation}\label{AB:e} u(x,t)=v(x)-\beta t,\end{equation}
where~$v\in C(\overline\Omega)\cap C^2(\Omega)$ and~$\beta\in\R\setminus\{0\}$.
\end{theorem}

The case~$\beta=0$ in~\eqref{AB:e} corresponds to stationary solutions of~\eqref{MAIN:EQ:pa3}
and we will address specifically this class of solutions in
a forthcoming work.\medskip

In view of Theorem~\ref{ABSE}, a related (though somehow technically different) question
focuses on the possible existence of traveling waves of burning solutions.

In this case, we consider a global equation of the form
\begin{equation}\label{MAIN:EQ:pa3-TW}\begin{split}
\partial_t u(x,t)=c\partial^2_x u(x,t)+\int_{\R}\big(u(y,t)-\Theta\big)_+\,K(x-y)\,dy,\end{split}
\end{equation}
where~$c\in(0,+\infty)$ is the diffusion coefficient and~$\Theta\in\R$ the ignition temperature,
and the equation now describes the temperature~$u$ at position~$x\in\R$ and time~$t\in\R$.

In this setting, we assume a bound on the interaction kernel, namely that
\begin{equation}\label{LAMB}
K(r)\le\Lambda \chi_{(-R,R)}(r),
\end{equation}
for some~$\Lambda$, $R>0$, and we have the following result:

\begin{theorem}[Existence of traveling waves for kernels with short-range interactions]\label{TRAVE}
Assume~\eqref{LAMB}.
For every~$\omega>0$ and~$\kappa>0$ there exists~$R_\star>0$, depending only
on~$c$, $\omega$ and~$\Lambda$, such that if~$R\in(0,R_\star)$ then there exists a solution of~\eqref{MAIN:EQ:pa3-TW} of the form
$$ u(x,t)=v(x+\omega t)+\Theta$$
for some~$v:\R\to\R$ satisfying~$v(0)=0$ and~$v'(0)=\kappa$.
\end{theorem}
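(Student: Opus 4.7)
The plan is to reduce the existence of a traveling wave to a nonlocal second-order ODE on $\R$ and solve it by a contraction-mapping argument, using the short-range assumption~\eqref{LAMB} as the smallness parameter. Substituting the ansatz $u(x,t)=v(x+\omega t)+\Theta$ into~\eqref{MAIN:EQ:pa3-TW} and performing the change of variable $\eta=y+\omega t$ inside the convolution, the $t$-dependence drops out and one obtains the single-variable nonlocal ODE
\[
cv''(\xi)-\omega v'(\xi)+(v_+\ast K)(\xi)=0, \qquad v(0)=0,\ v'(0)=\kappa,
\]
with $\xi=x+\omega t$ and $v_+:=\max\{v,0\}$. Since $v\mapsto v_+$ is positively homogeneous of degree one, the rescaling $v=\kappa\tilde v$ reduces everything to the case $\kappa=1$, which explains why the threshold $R_\star$ in the statement depends only on $c$, $\omega$, $\Lambda$.

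The natural zeroth-order profile (corresponding to $K\equiv0$) is $v_0(\xi):=\tfrac{c}{\omega}(e^{\omega\xi/c}-1)$; it satisfies $v_0(0)=0$ and $v_0'(0)=1$, is bounded on $(-\infty,0]$, and grows like $e^{\omega\xi/c}$ at $+\infty$. Writing $v=v_0+\phi$, the perturbation $\phi$ must satisfy
\[
c\phi''(\xi)-\omega\phi'(\xi) = -\bigl((v_0+\phi)_+\ast K\bigr)(\xi), \qquad \phi(0)=\phi'(0)=0,
\]
which I would integrate by means of the integrating factor $e^{-\omega\xi/c}$ to obtain an explicit Duhamel representation for $\phi$. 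Viewing this representation as the definition of a nonlinear operator $\mathcal T$, a fixed point $\phi=\mathcal T(\phi)$ produces the desired traveling wave.

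Thanks to~\eqref{LAMB}, one has the pointwise estimate
\[
\bigl|((v_0+\phi)_+\ast K)(\xi)\bigr|\le 2R\Lambda\sup_{|\eta-\xi|\le R}\bigl(|v_0(\eta)|+|\phi(\eta)|\bigr),
\]
so that $R\Lambda$ appears as an explicit smallness factor. I would equip the space of continuous functions on $\R$ vanishing at $0$ with a weighted norm of the form $\|\phi\|_\star:=\sup_{\xi\in\R}|\phi(\xi)|\,e^{-\mu(\xi)}$, where $\mu$ is tailored to the two-sided growth of $v_0$ (for instance $\mu(\xi)=\omega\xi_+/c+\delta|\xi|$ for a small $\delta>0$), so that $v_0$ lies in a fixed ball and the Duhamel operator acts boundedly. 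Then, exploiting the pointwise estimate above and the Lipschitz property $|v_+-w_+|\le|v-w|$, one verifies that $\mathcal T$ maps a suitable closed ball into itself and is a contraction whenever $R$ is below an explicit threshold $R_\star=R_\star(c,\omega,\Lambda)$. The Banach fixed point theorem delivers $\phi$; continuity of $(v_0+\phi)_+\ast K$ bootstraps $\phi$ to $C^2(\R)$, and undoing the rescaling, setting $v:=v_0+\phi$ and $u(x,t):=v(x+\omega t)+\Theta$ reverses the ansatz and produces the required traveling wave with $v(0)=0$ and $v'(0)=\kappa$.

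The delicate point is the calibration of the weighted norm in the third step: it must simultaneously accommodate the exponential blow-up of $v_0$ at $+\infty$, allow the solution $v$ to be unbounded (as is forced, a posteriori, by Theorem~\ref{ABBOU}), and keep the iterated integrals arising in the Duhamel formula finite so that the factor $R\Lambda$ actually drives the contraction. Once this balance is struck, the explicit threshold $R_\star$ is obtained by requiring the resulting contraction constant to be strictly less than one.
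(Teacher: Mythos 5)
Your plan is correct and rests on the same engine as the paper's proof: reduce to the traveling-wave ODE $\omega v'-cv''=v_+*K$ (the paper's Lemma~\ref{lSJD}), integrate it with the factor $e^{-\omega\xi/c}$ to get a Duhamel/fixed-point formulation, and run a contraction in a sup norm weighted by an exponential $e^{M\xi}$ with $M$ strictly larger than $\omega/c$, so that the factor $R\Lambda e^{CR}$ coming from~\eqref{LAMB} drives the contraction constant below one for $R$ small. Your homogeneity remark ($v\mapsto v_+$ is positively $1$-homogeneous, hence $\kappa$ scales out) is also the correct explanation of why $R_\star$ does not depend on $\kappa$; the paper gets the same effect by letting the ball radius $\rho=2\kappa e^{8R}$ scale with $\kappa$ while the Lipschitz constant of $r\mapsto r_+$ keeps the contraction rate $\kappa$-independent. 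The genuine differences are organizational. First, you solve for the perturbation $\phi=v-v_0$ off the explicit heat-equation wave $v_0=\frac{c}{\omega}(e^{\omega\xi/c}-1)$, whereas the paper iterates directly on the pair $(v,v')$ via the integral identity~\eqref{FIXP}; these are equivalent since the map is affine plus a globally $1$-Lipschitz nonlinearity. Second, and more substantively, you set up the fixed point on all of $\R$ with a two-sided weight $e^{\omega\xi_+/c+\delta|\xi|}$, while the paper works only on the half-line $[-2R,+\infty)$ (where the one-sided weight $e^{Mx}$ causes no trouble) and then extends leftward by the explicit exponential formula of Lemma~\ref{EXT:v}, exploiting that $v\le 0$ on $(-\infty,0]$ forces $v_+*K$ to vanish for $x<-R$. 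Your global route does close: on the left half-line the Duhamel integrals produce at worst growth of order $e^{\delta|\xi|}$, with contraction factor of order $R\Lambda e^{CR}/\big(\delta(\omega/c+\delta)\big)$, so fixing $\delta$ and shrinking $R$ works; at $+\infty$ the secular term $\xi e^{\omega\xi/c}$ produced by the first iteration is absorbed by the extra $e^{\delta\xi}$ at the cost of a factor $1/(e\delta)$. What the paper's half-line-plus-extension buys is that it sidesteps this calibration at $-\infty$ entirely and, as a bonus, yields for free the structural information $v\le0\le v'$ on $(-\infty,0]$ that is reused later (e.g.\ in Theorem~\ref{ABBOU:side}); what your version buys is a single uniform functional-analytic step with no matching of the two pieces. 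The one point you flag as delicate is indeed the only place where care is needed, and it does go through.
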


Regarding this statement, we observe that the conditions~$v(0)=0$ and~$v'(0)=\kappa>0$
entail that~$u$ is a nontrivial, burning solution according to
the setting introduced on page~\pageref{BURSO}.

A variant of Theorem~\ref{TRAVE} consists in replacing the assumption that
the range~$R$ of interaction is sufficiently small with the one that the intensity~$\Lambda$
of the interaction kernel is sufficiently small, according to the following result:

\begin{theorem}[Existence of traveling waves for kernels with mild interactions]\label{TRAVE.BIS}
Assume~\eqref{LAMB}.
For every~$\omega>0$ and~$\kappa>0$ there exists~$\Lambda_\star>0$, depending only
on~$c$, $\omega$ and~$R$, such that if~$\Lambda\in(0,\Lambda_\star)$ then there exists a solution of~\eqref{MAIN:EQ:pa3-TW} of the form
$$ u(x,t)=v(x+\omega t)+\Theta$$
for some~$v:\R\to\R$ satisfying~$v(0)=0$ and~$v'(0)=\kappa$.\end{theorem}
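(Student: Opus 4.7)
The strategy is completely parallel to that of Theorem~\ref{TRAVE}, with the smallness now carried by the intensity bound $\Lambda$ instead of the interaction range $R$. Substituting the traveling-wave ansatz $u(x,t) = v(\xi) + \Theta$ with $\xi := x + \omega t$ into~\eqref{MAIN:EQ:pa3-TW}, the existence problem reduces to producing a profile $v\in C^{2}(\R)$ solving the integro-ODE
\[
c\,v''(\xi) - \omega\,v'(\xi) + \int_\R v(\eta)_{+}\,K(\xi-\eta)\,d\eta \;=\; 0,
\qquad v(0)=0,\quad v'(0)=\kappa.
\]

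I would then recast this as a fixed-point problem. The homogeneous operator $Lv := cv'' - \omega v'$ has fundamental system $\{1,\,e^{\omega\xi/c}\}$, and variation of parameters yields an explicit Duhamel operator $\mathcal{D}$ such that the unique solution of $Lv = -F$ with $v(0)=0$ and $v'(0)=\kappa$ is $v = v_{0} + \mathcal{D}[F]$, where $v_{0}(\xi) := (c\kappa/\omega)(e^{\omega\xi/c}-1)$ is the profile corresponding to $K \equiv 0$. Writing $v = v_0 + \psi$, the perturbation $\psi$ satisfies the fixed-point equation $\psi = \mathcal{D}\bigl[(v_{0}+\psi)_{+} \ast K\bigr]$ with $\psi(0)=\psi'(0)=0$.

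The heart of the proof is then a Banach fixed-point argument in a weighted space of continuous functions $\psi\colon\R\to\R$ endowed with the norm $\|\psi\| := \sup_{\xi\in\R} |\psi(\xi)|/(1+e^{\mu\xi})$ for some $\mu > \omega/c$ to be fixed, intersected with a ball small enough that $v_0 + \psi$ remains non-positive on $(-\infty,0]$. On this ball, the support condition $\mathrm{supp}\,K \subseteq [-R,R]$ and the pointwise bound $K\le\Lambda$ yield
\[
\bigl|\bigl((v_0+\psi)_{+} \ast K\bigr)(\xi)\bigr| \;\le\; 2R\,\Lambda \sup_{|\eta-\xi|\le R} |v_0(\eta)+\psi(\eta)| \;\le\; C(R,\mu)\,\Lambda\,\bigl(\|v_0\|+\|\psi\|\bigr)\bigl(1+e^{\mu\xi}\bigr),
\]
and an analogous Lipschitz estimate (since $x\mapsto x_{+}$ is $1$-Lipschitz). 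A direct inspection of the Duhamel kernel shows that $\mathcal{D}$ is bounded on the weighted space with constant $C'(c,\omega,\mu)$, precisely because $\mu > \omega/c$ makes the factor $e^{\omega(\xi-s)/c}$ from variation of parameters integrable against the weight. Taking $\Lambda_\star := 1/(C\cdot C')$ (with $C = C(R,\mu)$) the map $\psi \mapsto \mathcal{D}[(v_0+\psi)_{+}\ast K]$ becomes a contraction on the ball, and its unique fixed point provides the desired profile. The conditions $v(0)=0$ and $v'(0)=\kappa$ are built into the definition of $\mathcal{D}$.

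The main obstacle is the calibration of the weight: it must be fat enough to accommodate the exponential growth of $v_0$ at $+\infty$ and slow enough for the Duhamel convolution to act continuously in the induced topology, which forces $\mu > \omega/c$; moreover the additive $1$ in $1+e^{\mu\xi}$ is needed because $v_0$ tends to the finite nonzero value $-c\kappa/\omega$ as $\xi\to-\infty$. A secondary technical point is to verify that the non-positivity constraint on $v_0+\psi$ for $\xi\le 0$ is preserved along the iteration, which translates into a quantitative smallness requirement on $\Lambda_\star$ expressed only in terms of $c$, $\omega$, and $R$, as stated.
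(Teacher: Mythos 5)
Your overall strategy is the same as the paper's: reduce to the profile equation \eqref{ChCSKJ.oqdjf34rg0hn.35}, invert the linear part $cv''-\omega v'$ by variation of parameters (this is exactly \eqref{FIXP-NEC}), and run a Banach fixed point in an exponentially weighted space of continuous functions, with the contraction constant proportional to $\Lambda$. The paper works with the pair $(v,v')$ and the weight $e^{-Mx}$, inserting a large factor $L$ in front of the second component so that the $\Lambda$-smallness lands on the convolution term; your decomposition $v=v_0+\psi$ off the explicit heat-equation wave is a cosmetic repackaging of the same idea. There is, however, one genuine gap.

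You claim that the Duhamel operator $\mathcal{D}$ is bounded on the weighted space over all of $\R$ with weight $1+e^{\mu\xi}$, ``because $\mu>\omega/c$ makes the factor $e^{\omega(\xi-s)/c}$ integrable against the weight.'' That condition only controls the end $\xi\to+\infty$. As $\xi\to-\infty$ the weight tends to $1$, so your norm records nothing more than a uniform bound on $(-\infty,0]$; but for $\xi<0$ one has (taking $c=1$)
\begin{equation}
\mathcal{D}[F](\xi)=\frac1\omega\int_\xi^0\bigl(e^{\omega(\xi-\theta)}-1\bigr)F(\theta)\,d\theta,
\end{equation}
and for a forcing that is merely bounded on $(-\infty,0]$ (e.g.\ $F\equiv1$ there) this grows linearly in $|\xi|$. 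Hence $\mathcal{D}$ is \emph{not} bounded on your space, and neither the self-map nor the contraction estimate closes as written: a $\psi$ in your ball may satisfy $v_0+\psi>0$ on a left half-line (the weight is $O(1)$ there, so the ball only constrains $|\psi|$ by a constant, while $v_0\ge-c\kappa/\omega$), making the forcing bounded below near $-\infty$ and its Duhamel image of infinite weighted norm. The rescue is exactly the finite range of $K$ in \eqref{LAMB} combined with nonpositivity of the profile on $(-\infty,0]$: then $(v_0+\psi)_+*K$ vanishes identically on a left half-line, and the integral above only collects mass from a bounded interval. This is precisely what the paper's setup buys by running the fixed point on $[-2R,+\infty)$ only (so there is no $-\infty$ to estimate) and then extending leftward by the explicit homogeneous formula of Lemma~\ref{EXT:v}. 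You must make this restriction explicit, and you must also justify that the nonpositivity constraint is invariant under the iteration; note that it does \emph{not} follow from shrinking the ball (near $\xi=0^-$ one has $v_0(\xi)\approx\kappa\xi$, itself small, so a uniformly small $\psi$ can push $v_0+\psi$ positive there), but it does follow from the sign structure: the kernel $e^{\omega(\xi-\theta)}-1$ is nonpositive for $\xi\le\theta\le0$ while the forcing $(v_0+\psi)_+*K$ is nonnegative, so $\mathcal{D}[F]\le0$ on $(-\infty,0]$, which is the content of Corollary~\ref{coISPK}. With these two repairs your argument goes through and is essentially equivalent to the paper's.
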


{F}rom our perspective, the analysis of traveling waves is instrumental to showcase some dynamics of fire propagation over time and space, describing the steady-state progression of a fire front through a landscape. 
The description of fires traveling at a more or less constant spread rate is indeed
an established feature in the experimental literature related to bushfires, since
there is empirical
evidence that a fire ``appears to reach a
quasi-steady speed after some time'' (see~\cite[page~239 and Figure~6]{CHENEYGOULD95}).

In this sense, on the one hand, the mathematical setting needed in the analysis of traveling waves is just an idealization of the real world, since the analytical
framework relies on an infinite spatial domain, while landscapes are finite, and assumes uniform conditions, while wind and fuel availability typically vary across the landscape.
On the other hand, the analysis of traveling waves may come in handy when
focusing on localized fire dynamics, to capture the internal feature of fire spread: this is particularly realistic when the region of interest given by the fire front and its immediate surroundings is relatively small compared to the entire landscape. As a byproduct, the simplification of dealing with homogeneous infinite domains makes it easier to construct and investigate analytically steady shapes moving at constant speed, providing a simplified yet robust way to describe the spread rate of a fire.\medskip

In practice, these idealized one-dimensional fronts moving at a constant speed can describe concrete situations in which the fire propagation happens to be essentially transverse to the front and linear in time: see e.g. Figures~8 and~13 in~\cite{MORVAN2011469}.

One-dimensional moving fronts have also been studied in bushfire models to account for
the combined effects of wind and slope inclination (see e.g. equation~(1) in~\cite{56:BYUHBCSP},
and notice that, for a constant wind and slope, this equation prescribes a constant velocity of
the fire advance).

While the environmental features are not part of the analysis developed here, since the moving fronts in this paper are the outcome of the temperature variation created by the bushfire itself, constant speed lines can also be due to wind effects, see e.g. Figures~1 and~2 in~\cite{Cruz2019}.
Parallel front lines can also be produced by the specific structure of the territory, see Figure~\ref{figcond3},
and they occur very often in controlled burning, see e.g.
minutes 2:36--4:45
and 5:10--5:15 of the video \url{https://www.youtube.com/watch?v=inKxlK8OXG0}.
\medskip

\begin{figure}[h]
    \centering
    \includegraphics[height=0.3\textwidth]{./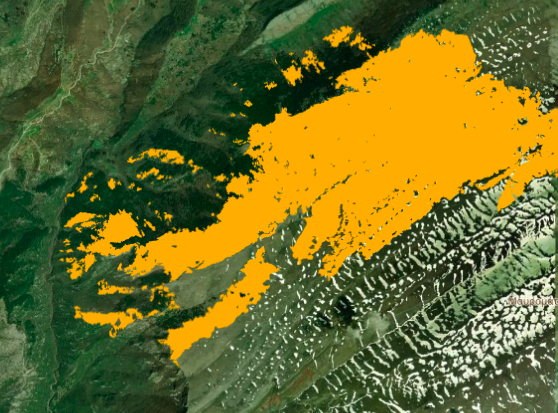}
    $\quad$
    \includegraphics[height=0.3\textwidth]{./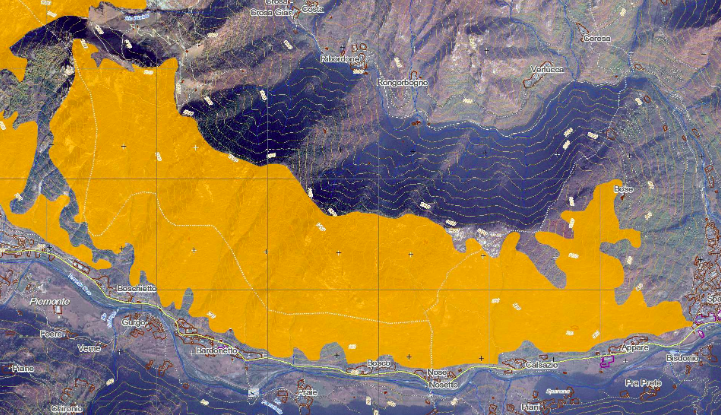}
    \caption{Delineation maps of actual bushfire events.
    Left: {\em EMSR747} Wildfire in Central Macedonia region, Greece, August 17, 2024.
Right: {\em EMSR253} Forest fire in Piemonte, Italy, October 27, 2017.
Images from the Copernicus Emergency Management Service, \url{https://rapidmapping.emergency.copernicus.eu/EMSR747},
\url{https://emergency.copernicus.eu/mapping/list-of-components/EMSR253}.}
    \label{figcond3}
\end{figure}

We observe that the traveling waves provided in the proof of Theorems~\ref{TRAVE}
and~\ref{TRAVE.BIS}
happen to be unbounded.
This is indeed a general feature, since bounded traveling waves do not exist:

\begin{theorem}[Absence of bounded traveling waves]\label{ABBOU}
Let~$\omega>0$ and~$K\in L^1(\R)$.
Assume that there exists a bounded solution of~\eqref{MAIN:EQ:pa3-TW} of the form
$$ u(x,t)=v(x+\omega t)+\Theta$$
for some~$v:\R\to\R$.

Then, $u$ is necessarily constant.
\end{theorem}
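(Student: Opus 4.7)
The plan is to substitute the traveling wave ansatz $u(x,t)=v(x+\omega t)+\Theta$ into~\eqref{MAIN:EQ:pa3-TW} and reduce the problem to an autonomous ODE. Writing $\xi:=x+\omega t$ and changing variables $\eta=y+\omega t$ in the convolution term, the equation collapses to
\begin{equation*}
c\,v''(\xi)-\omega\,v'(\xi)+F(\xi)=0,\qquad F(\xi):=\int_{\R}v_+(\xi-z)\,K(z)\,dz,
\end{equation*}
where $v_+$ denotes the positive part of $v$. Since $v$ is bounded and $K\in L^{1}(\R)$, the forcing $F$ is nonnegative and bounded, and $v\in C^{2}(\R)$.

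Next I would integrate this linear ODE explicitly. Setting $\mu:=\omega/c>0$, rewriting the equation as $(e^{-\mu\xi}v')'=-e^{-\mu\xi}F(\xi)/c$ yields
\begin{equation*}
v'(\xi)=C\,e^{\mu\xi}+\int_{\xi}^{+\infty}e^{\mu(\xi-s)}\,\frac{F(s)}{c}\,ds
\end{equation*}
for some constant $C$. The particular integral is bounded because $F$ is. If $C\ne0$, then $v'$ would grow exponentially at $+\infty$, forcing $v$ itself to be unbounded; hence $C=0$ is selected by the boundedness of $v$. The resulting formula shows $v'\ge 0$, so $v$ is nondecreasing, and the boundedness of $v$ provides finite limits $L_{\pm}:=\lim_{\xi\to\pm\infty}v(\xi)$ with $L_{-}\le L_{+}$.

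The final step is to show that these limits force $v$ to be nonpositive, hence constant. By dominated convergence with dominating function $\|v\|_{\infty}\,|K(z)|$, one has $F(\xi)\to L_{\pm}^{+}\,\|K\|_{L^{1}}$ as $\xi\to\pm\infty$, where $L^{+}:=\max(L,0)$. A further application of dominated convergence in the integral representation of $v'$ (after the change $t=s-\xi$) yields $v'(\xi)\to L_{\pm}^{+}\,\|K\|_{L^{1}}/(\mu c)$. A strictly positive limit of $v'$ at $+\infty$ or at $-\infty$ would produce linear growth or decay of $v$ incompatible with boundedness, so $L_{\pm}^{+}=0$. Thus $v\le 0$ everywhere, whence $v_+\equiv 0$, $F\equiv 0$, and the ODE reduces to $cv''=\omega v'$, whose only bounded solutions are constants; the degenerate case $\|K\|_{L^{1}}=0$ is handled directly by this same reduction.

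The main delicacy is fixing the integration constant $C$: boundedness of $v$ is essentially the only a priori information, and one must rule out the exponentially growing homogeneous branch of $v'$ without assuming any decay at infinity. Once $C=0$ is in place, monotonicity of $v$ together with the $L^{1}$-integrability of $K$ drive the rest of the argument via routine dominated convergence.
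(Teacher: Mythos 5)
Your proof is correct and follows essentially the same route as the paper's: both integrate the traveling-wave ODE $\omega v'-cv''=v_+*K$ to the representation $e^{-\omega x}v'(x)=C+\int_x^{+\infty}e^{-\omega\theta}v_+*K(\theta)\,d\theta$, use boundedness of $v$ to force $C=0$, and then exploit the resulting monotonicity of $v$ together with a lower bound on $v'$ near $+\infty$ to rule out $v_+\not\equiv0$. The only cosmetic differences are that the paper truncates $K$ to a set of positive mass to get a uniform lower bound on $v'$, whereas you compute the exact limit of $v'$ by dominated convergence, and the degenerate cases $K\equiv0$ or $v_+\equiv0$ are dispatched at the start in the paper rather than at the end.
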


We think that Theorem~\ref{ABBOU} is interesting also because
it somehow explains the reason for which in Theorems~\ref{TRAVE} and~\ref{TRAVE.BIS}
one can construct traveling fronts of arbitrarily large speed~$\omega$.
This feature may seem, at a first glance, in contradiction with our practical
experience, since, typically, we expect that the maximal speed of propagation
of a fire line is dictated by environmental conditions such as fuel composition
and wind. 
In this spirit, Theorem~\ref{ABBOU} provides an explanation, given that the
drive for these ``fast spreading'' fronts comes, roughly speaking, from the
extremely high gradient temperature between the burning and unburned territories.
\medskip

We stress that,
as it can be readily checked, in the absence of the interaction kernel, traveling waves of the heat equation are monotone and of exponential type, namely of the form~$v(x)=\frac{\kappa(e^{\omega x}-1)}\omega$,
and the construction of the traveling waves in Theorems~\ref{TRAVE} and~\ref{TRAVE.BIS}
may look, at a first glance, a ``perturbative argument'' based on kernels with either short range of interaction
or small intensity. But the scenario is indeed more complex than that: indeed, the fact that the domain under consideration
is the whole real line (rather than a bounded set) may produce unexpected patterns, and this can be amplified by
the fact that, being the solution unbounded (as detected by Theorem~\ref{ABBOU}), divergent effects may influence the structure
of the global picture. In particular, the growth at infinity of the traveling waves is not the same as in the case of the heat equation,
as showcased in the next result:

\begin{theorem}[Exponential bounds for traveling waves]\label{NONMSPT43}
Let~$c=1$ and~$\omega>0$, and assume that there exists a solution of~\eqref{MAIN:EQ:pa3-TW} of the form
$$ u(x,t)=v(x+\omega t)+\Theta$$
for some~$v:\R\to\R$, with~$v(0)=0$ and~$v'(0)=\kappa\in(0,+\infty)$.

Then,
\begin{equation}\label{80iujgo486nb7m2985vb65BSAnmgyhn9}
\begin{split}
&{\mbox{$v'(x)\ge \kappa e^{\omega x}$ for all~$x\in(-\infty,0)$}}\\&
{\mbox{and $v'(x)\le \kappa e^{\omega x}$ for all~$x\in[0,+\infty)$.}}\end{split}
\end{equation}

However, if there exist~$\lambda$, $\varrho>0$ such that, for all~$r\in\R$,
\begin{equation}\label{LAMBdasotto}
K(r)\ge\lambda \chi_{(-\varrho,\varrho)}(r),
\end{equation}
then there cannot exist~$\kappa_\star\in(0,+\infty)$ such that, for all~$x\in[0,+\infty)$,
\begin{equation}\label{LAMBdasotto.bi}
v'(x)\ge \kappa_\star e^{\omega x}.
\end{equation}
\end{theorem}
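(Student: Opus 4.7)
I would first reduce \eqref{MAIN:EQ:pa3-TW} to a pointwise ODE for the profile $v$. Substituting the traveling-wave ansatz $u(x,t)=v(\xi)+\Theta$ with $\xi=x+\omega t$ into \eqref{MAIN:EQ:pa3-TW}, and performing the change of variables $z=y+\omega t$ in the convolution (so that $x-y=\xi-z$), yields
\[
cv''(\xi)-\omega v'(\xi)+I(\xi)=0\qquad\text{on }\R,\qquad I(\xi):=\int_\R v(z)_+\,K(\xi-z)\,dz\ge0.
\]
All subsequent estimates are performed on this ODE.

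The bounds in \eqref{80iujgo486nb7m2985vb65BSAnmgyhn9} would then follow from the integrating factor adapted to the transport part. Setting $w(\xi):=v'(\xi)\,e^{-\omega\xi/c}$, I compute
\[
cw'(\xi)=\bigl(cv''(\xi)-\omega v'(\xi)\bigr)e^{-\omega\xi/c}=-I(\xi)\,e^{-\omega\xi/c}\le 0,
\]
so $w$ is non-increasing on $\R$. Combined with the normalization $w(0)=v'(0)=\kappa$, this immediately produces the one-sided exponential bounds $v'(\xi)\ge\kappa\,e^{\omega\xi/c}$ for $\xi\le0$ and $v'(\xi)\le\kappa\,e^{\omega\xi/c}$ for $\xi\ge0$, which are the content of \eqref{80iujgo486nb7m2985vb65BSAnmgyhn9} (the exponential rate $\omega/c$ reducing to $\omega$ under the natural normalization $c=1$).

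For the nonexistence of $\kappa_\star$ I would argue by contradiction: suppose $v'(x)\ge\kappa_\star e^{\omega x}$ for all $x\ge0$. Integrating and using $v(0)=0$ gives $v(x)\ge\kappa_\star(e^{\omega x}-1)/\omega$, hence $v(x)\ge\kappa_\star e^{\omega x}/(2\omega)$ for $x\ge x_0:=(\log 2)/\omega$. The pointwise lower bound \eqref{LAMBdasotto} on $K$ then produces, for every $x\ge x_0+\varrho$,
\[
I(x)\ge\lambda\int_{x-\varrho}^{x+\varrho}v(y)\,dy\ge\frac{\lambda\kappa_\star\sinh(\omega\varrho)}{\omega^2}\,e^{\omega x}=:\beta\,e^{\omega x}.
\]
Feeding this estimate into the identity $cw'(\xi)=-I(\xi)e^{-\omega\xi/c}$ from the integrating-factor step and integrating on $[x_0+\varrho,+\infty)$ drives $w$ strictly below zero at infinity, which contradicts the positivity of $v'$ (and hence of $w$) on $[0,+\infty)$ imposed by the standing assumption. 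The main technical obstacle is that the resulting integrand $e^{\omega\xi(1-1/c)}$ is non-integrable at $+\infty$ precisely when $c\ge1$, giving the contradiction on a single pass; in the complementary regime $c<1$ one obtains only a finite drop of $w$, and the argument must be iterated by recycling the improved exponential lower bound on $v$ into the convolution estimate, bootstrapping until the accumulated drop of $w$ finally exceeds the initial value $\kappa$ available from the first part.
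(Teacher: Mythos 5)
Your argument is, in substance, the paper's own: the first part is obtained there by exactly your integrating-factor computation (the paper writes $-\frac{d}{dx}\big(e^{-\omega x}v'(x)\big)=e^{-\omega x}\,v_+*K(x)\ge0$ and integrates over $[x,0]$ and $[0,x]$), and the second part by the same contradiction, namely that \eqref{LAMBdasotto.bi} together with \eqref{LAMBdasotto} forces $v_+*K(\theta)\gtrsim e^{\omega\theta}$ for large $\theta$, so that $e^{-\omega x}v'(x)=\kappa-\int_0^x e^{-\omega\theta}v_+*K(\theta)\,d\theta$ decreases at least linearly and eventually becomes negative.

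The one point to fix is your closing paragraph. The paper works throughout its traveling-wave sections under the normalization $c=1$ (replace $\omega$ by $\omega/c$ and $K$ by $K/c$), and the rate $e^{\omega x}$ appearing in \eqref{80iujgo486nb7m2985vb65BSAnmgyhn9} and \eqref{LAMBdasotto.bi} must be read in that normalization -- consistently with your own observation that the first part naturally yields the rate $\omega/c$. Once $c=1$, your integrand is the constant $\beta$, the single pass always succeeds, and the dichotomy $c\ge1$ versus $c<1$ disappears; it is an artifact of pairing the unnormalized equation with the normalized exponent in \eqref{LAMBdasotto.bi}. Moreover, the bootstrap you sketch for $c<1$ would not close the argument: every pass starts from the same hypothesis $v'\ge\kappa_\star e^{\omega x}$, hence produces the same bound $I\ge\beta e^{\omega x}$ and the same \emph{finite} total drop of $w$, and nothing improves from one iteration to the next. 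Indeed, for $c<1$ a profile with $v\sim Be^{\omega x}$ satisfies $\omega v'-cv''=B\omega^2(1-c)e^{\omega x}$, which has exactly the growth of the forcing $v_+*K$ it generates, so growth at rate precisely $e^{\omega x}$ is self-consistent in that regime and no iteration could rule it out. Simply declare $c=1$ at the outset (as the paper implicitly does) and delete the last paragraph; the rest of your proof is complete and coincides with the paper's.
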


Under additional assumptions, Theorem~\ref{ABBOU} can be sharpened
by detecting the side of the divergent structure of the traveling wave:

\begin{theorem}[Divergence of traveling waves at~$+\infty$]\label{ABBOU:side}
Let~$\omega>0$ and suppose that~$K$ satisfies~\eqref{LAMB}.
Assume that there exists a solution of~\eqref{MAIN:EQ:pa3-TW} of the form
$$ u(x,t)=v(x+\omega t)+\Theta$$
for some~$v:\R\to\R$, with~$v(0)=0$ and~$v'(0)=\kappa\in(0,+\infty)$.

Then, 
\begin{equation}\label{SIDE-la1}
\lim_{x\to-\infty}v(x) {\mbox{ exists, is finite, and nonpositive.}}\end{equation}

Also, 
\begin{equation}\label{SIDE-la2}{\limsup_{x\to+\infty}}|v(x)|=+\infty.\end{equation}
\end{theorem}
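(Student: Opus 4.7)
The plan is to recast~\eqref{MAIN:EQ:pa3-TW} as an integro-ODE for $v$, extract from it an exponentially weighted version of $v'$ that is monotone, and then combine this monotonicity with the compact-support assumption~\eqref{LAMB} and with Theorem~\ref{ABBOU}. Plugging $u(x,t)=v(x+\omega t)+\Theta$ into~\eqref{MAIN:EQ:pa3-TW} and setting $\xi:=x+\omega t$, a change of variables turns the convolution into $(v_+*K)(\xi)$, yielding
\begin{equation*}
cv''(\xi)-\omega v'(\xi)+(v_+*K)(\xi)=0.
\end{equation*}
Since $v_+*K\ge0$, this gives $cv''\le\omega v'$, so $\varphi(\xi):=v'(\xi)\,e^{-\omega\xi/c}$ is nonincreasing on $\R$, with $\varphi(0)=\kappa>0$.

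For~\eqref{SIDE-la1}, I would first invoke Theorem~\ref{NONMSPT43} to conclude $v'>0$ on $(-\infty,0)$; combined with $v(0)=0$, this forces $v<0$, hence $v_+\equiv0$, on $(-\infty,0)$. Because~\eqref{LAMB} makes $K$ supported in $(-R,R)$, for every $x\le-R$ the integrand defining $(v_+*K)(x)$ vanishes identically, so $v$ satisfies the linear ODE $cv''-\omega v'=0$ on $(-\infty,-R]$. Its general solution $v(x)=A+Be^{\omega x/c}$ admits a finite limit $A\in\R$ as $x\to-\infty$, and $A\le0$ since $v\le0$ on $(-\infty,0)$; this yields~\eqref{SIDE-la1}.

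For~\eqref{SIDE-la2}, I would use the monotonicity of $\varphi$ to produce a dichotomy on $[0,+\infty)$: either $\varphi\ge0$ throughout, in which case $v'\ge0$ and $v$ is nondecreasing on $[0,+\infty)$, or else $\varphi(\xi_0)<0$ for some $\xi_0>0$, and then monotonicity of $\varphi$ forces $v'<0$ on $[\xi_0,+\infty)$, so $v$ is strictly decreasing past $\xi_0$. In either scenario $v$ is eventually monotone at $+\infty$ and admits a limit $L\in[-\infty,+\infty]$. A finite value of $L$, combined with~\eqref{SIDE-la1}, would make $v$ bounded on all of $\R$, and Theorem~\ref{ABBOU} would then force $v$ to be constant, contradicting $v'(0)=\kappa>0$. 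Consequently $L=\pm\infty$, which proves~\eqref{SIDE-la2}.

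The main obstacle will be the reduction to a linear equation on $(-\infty,-R]$: this step crucially exploits the compact support of $K$ provided by~\eqref{LAMB} in order to kill the nonlocal term, and without it one would need a substantially finer decay analysis of $v_+*K$ near $-\infty$. By contrast, the analysis at $+\infty$ is structurally simpler, since the monotone auxiliary function $\varphi$ already encodes the sign of $v'$, and Theorem~\ref{ABBOU} handles the finite-limit case directly.
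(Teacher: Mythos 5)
Your argument is correct and follows essentially the same route as the paper: the sign information from Theorem~\ref{NONMSPT43} (equivalently, the monotonicity of $e^{-\omega x/c}v'(x)$) forces $v\le 0$ on $(-\infty,0]$, so the convolution vanishes for $x<-R$ and the equation becomes linear there, yielding the finite nonpositive limit at $-\infty$, while the divergence at $+\infty$ follows from Theorem~\ref{ABBOU}. Your eventual-monotonicity dichotomy for $\varphi$ on $[0,+\infty)$ supplies the detail --- upgrading ``$v$ is unbounded'' to ``$|v(x)|\to+\infty$'' --- that the paper leaves implicit in its one-line conclusion.
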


\subsection{Monotonicity issues}
Another interesting, and quite surprising, feature of these traveling waves is that,
differently from the case of the heat equation, they are not necessarily monotone, as described in the following result:

\begin{theorem}[Lack of monotonicity for traveling waves]\label{NONMSPT43.rm}
Assume~\eqref{LAMB} and~\eqref{LAMBdasotto}.
Let~$\omega$, $\kappa\in(0,+\infty)$ and assume that there exists a solution of~\eqref{MAIN:EQ:pa3-TW} of the form~$ v_\omega(x+\omega t)+\Theta$
for some~$v_\omega:\R\to\R$, with~$v_\omega(0)=0$ and~$v'_\omega(0)=\kappa$.

Then, given~$\omega_0>0$,
there exists~$\omega\in(0,\omega_0)$ such that the function~$v_\omega$ is not monotone nondecreasing.
\end{theorem}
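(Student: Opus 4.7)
The plan is to argue by contradiction, assuming that $v_\omega$ is monotone nondecreasing and deriving a conflict between two estimates that decouple as $\omega \to 0^+$. Substituting $u(x,t) = v_\omega(x+\omega t) + \Theta$ into~\eqref{MAIN:EQ:pa3-TW} yields the traveling wave equation
\[ c v''(\xi) - \omega v'(\xi) + (K * v_+)(\xi) = 0, \qquad \xi \in \R, \]
for $v := v_\omega$. Under the monotonicity assumption together with $v(0) = 0$, we have $v_+ = 0$ on $(-\infty,0]$ and $v_+ = v$ on $[0,+\infty)$. Since Theorem~\ref{ABBOU} (applicable under~\eqref{LAMB}) excludes bounded traveling waves with nontrivial slope, monotonicity forces $v(+\infty) = +\infty$.

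Next I would multiply by the integrating factor $e^{-\omega\xi/c}$ and integrate over $[0,\xi]$ to obtain
\[ v'(\xi) = e^{\omega\xi/c}\bigl(\kappa - F(\xi)\bigr), \qquad F(\xi) := \frac{1}{c}\int_0^\xi e^{-\omega s/c}(K*v_+)(s)\,ds. \]
Monotonicity of $v$ implies $F \le \kappa$ pointwise, so the nondecreasing function $F$ admits a limit $F(\infty) \le \kappa$. The absence of a strictly positive exponential lower bound for $v'$ provided by Theorem~\ref{NONMSPT43} (invoked thanks to~\eqref{LAMBdasotto}) rules out $F(\infty) < \kappa$, forcing
\[ \int_0^\infty e^{-\omega s/c}(K*v_+)(s)\,ds = c\kappa. \]
Applying Fubini's theorem, the left-hand side equals $\int_0^\infty v(z)\,\varphi(z)\,dz$ with $\varphi(z) := \int_0^\infty K(s-z)\,e^{-\omega s/c}\,ds$. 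For $z \ge R$, an explicit computation using~\eqref{LAMB} and~\eqref{LAMBdasotto} (together with $\int_{-\varrho}^{\varrho} e^{-\omega w/c}\,dw \ge 2\varrho$) gives $\varphi(z) \ge 2\varrho\lambda\,e^{-\omega z/c}$, whence
\[ c\kappa \ge 2\varrho\lambda \int_R^\infty v(z)\, e^{-\omega z/c}\,dz. \]

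The final ingredient is a competing lower bound on $\int_R^\infty v(z)e^{-\omega z/c}\,dz$ that diverges as $\omega \to 0^+$. The Gronwall-type estimate $v(\xi) \le (c\kappa/\omega)(e^{\omega\xi/c}-1)$, which tends to $\kappa\xi$ as $\omega \to 0$, combined with~\eqref{LAMB}, provides a uniform control of $K*v_+$ on any compact interval for small $\omega$. The equation then yields a bound $\|v''\|_{L^\infty([0,\xi_1])} \le C$ uniform in small $\omega$, for some fixed $\xi_1, C > 0$. Taylor expansion at $\xi = 0$ (using $v(0) = 0$ and $v'(0) = \kappa$) gives $v(\xi_0) \ge c_0 > 0$ for some $\xi_0 \in (0,\xi_1]$ and $c_0 > 0$ independent of small $\omega$, and monotonicity extends this to $v(z) \ge c_0$ for all $z \ge \xi_0$. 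Hence
\[ \int_R^\infty v(z)e^{-\omega z/c}\,dz \ge c_0 \cdot \frac{c}{\omega}\,e^{-\omega \max(R,\xi_0)/c} \longrightarrow +\infty \qquad \text{as } \omega \to 0^+. \]
Choosing $\omega \in (0,\omega_0)$ small enough, the combined inequalities become inconsistent, whence $v_\omega$ cannot be monotone nondecreasing, as desired.

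The most delicate step is the uniform-in-$\omega$ lower bound $v_\omega(\xi_0) \ge c_0$: although $v_\omega$ is pinned at the origin by $v_\omega(0) = 0$ and $v_\omega'(0) = \kappa$, the profile could in principle degenerate as $\omega \to 0^+$, and the argument depends crucially on the a priori upper bound to uniformly control $v_\omega''$ on a fixed neighborhood of the origin. Everything else is a careful bookkeeping of the nonlocal convolution together with the identity derived via Theorem~\ref{NONMSPT43}.
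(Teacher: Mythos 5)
Your proof is correct, but it takes a genuinely different route from the paper's. The paper argues by compactness: assuming monotonicity for all small~$\omega$, it uses the bound~$v_\omega'\le\kappa e^{\omega x}$ to get uniform $C^2$ control on compact sets, extracts a limit profile~$v_0$ as~$\omega\searrow0$, passes to the limit in the integral representation~\eqref{FIXP-NEC}, and contradicts the nonnegativity of~$v_0$ because the term~$\lambda v_0(\varrho)\varrho(x-2\varrho)^2$ eventually overwhelms~$\kappa x$. You instead work at each fixed small~$\omega$: you invoke the \emph{second} part of Theorem~\ref{NONMSPT43} (which the paper's proof of Theorem~\ref{NONMSPT43.rm} never uses) to force the exact identity~$\int_0^\infty e^{-\omega s/c}(K*v_+)(s)\,ds=c\kappa$, and then contradict it quantitatively via the~$c_0\,c/\omega$ divergence of~$\int_R^\infty v\,e^{-\omega z/c}\,dz$. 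Both arguments hinge on the same uniform-in-$\omega$ positivity of~$v_\omega$ at a fixed point~$\xi_0>0$ (your Taylor argument is the right way to get it, and matches the paper's uniform $C^2$ bounds), and both exploit the same phenomenon --- the cumulative effect of the convolution beats the linear seed~$\kappa x$ as~$\omega\to0$ --- but your version trades the compactness extraction for an explicit identity, which makes the contradiction more quantitative. Two small points to tidy up: Theorem~\ref{NONMSPT43} is stated (and proved) under the implicit normalization~$c=1$, so you should either normalize~$c=1$ at the outset (as the paper does via~$\omega\mapsto\omega/c$, $K\mapsto K/c$, which is harmless since the rescaling is by a fixed factor) or restate its conclusion with~$e^{\omega x/c}$; and your lower bound~$\varphi(z)\ge2\varrho\lambda e^{-\omega z/c}$ actually requires~$z\ge\varrho$ rather than~$z\ge R$, which is fine since~\eqref{LAMB} and~\eqref{LAMBdasotto} together force~$\varrho\le R$.
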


The next result shows that traveling waves are always monotone in an interval that extends indefinitely to the left,
with an explicit quantification of the interval endpoint.

\begin{theorem}[Monotonicity in large intervals]\label{qdwsfvbolgrb:SDVb}
Let~$\omega>0$ and assume that~\eqref{LAMB} holds true. Consider a solution of~\eqref{MAIN:EQ:pa3-TW} of the form
$$ u(x,t)=v(x+\omega t)+\Theta$$
for some~$v:\R\to\R$, with~$v(0)=0$ and~$v'(0)=\kappa\in(0,+\infty)$.

Then, there exists~$L>0$,
depending only on~$\omega$, $c$, $\Lambda$, and~$R$, such that~$v'(x)>0$ for all~$x\in(-\infty,L]$.

Furthermore, when~$c=1$, $\omega=1$, and~$\Lambda\left(
e^R(e^R-1)-R\right)<1$, one can take
\begin{equation}\label{LESYT:Sdwoed}
L:=\ln\left(\frac{1+\Lambda(e^R+R-1)}{\Lambda(e^R-e^{-R})}\right).\end{equation}
\end{theorem}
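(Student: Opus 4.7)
The plan is to argue by contradiction, using an integrated form of the traveling-wave ODE. Positivity of $v'$ on $(-\infty, 0]$ is immediate from the exponential lower bound $v'(x) \ge \kappa e^{\omega x/c}$ (together with $v'(0) = \kappa > 0$) supplied by Theorem~\ref{NONMSPT43}, so the whole effort is to extend positivity to an interval $[0, L]$. Suppose, toward a contradiction, that $v'$ has a zero in $(0, L]$, and let $\xi_0$ be the smallest such. Then $v$ is strictly increasing on $(-\infty, \xi_0)$ with $v(0) = 0$, so in particular $v(\xi_0) \ge 0$ and $v_+(z) = 0$ for all $z \le 0$.

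The key identity comes from integrating the traveling-wave equation
\[
c\, v''(\xi) - \omega\, v'(\xi) = -h(\xi), \qquad h(\xi) := \int_\R v_+(z)\, K(\xi - z)\, dz,
\]
between $0$ and $\xi_0$ and using $v'(0) = \kappa$, $v'(\xi_0) = 0$: one obtains
\[
\int_0^{\xi_0} h(s)\, ds \;=\; c\kappa + \omega\, v(\xi_0) \;\ge\; c\kappa.
\]
For the matching upper bound, I exploit $K \le \Lambda \chi_{(-R, R)}$ and Fubini to write
\[
\int_0^{\xi_0} h(s)\, ds \;\le\; \Lambda \int_0^{\xi_0 + R} v_+(z)\, |I(z)|\, dz, \qquad I(z) := \bigl\{s \in [0, \xi_0] : |s - z| < R\bigr\},
\]
and then insert the sharp exponential envelope $v_+(z) \le (c\kappa/\omega)(e^{\omega z/c} - 1)$ on $z \ge 0$ (again from Theorem~\ref{NONMSPT43}). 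The length $|I(z)|$ is piecewise linear in $z$ --- in the case $c = \omega = 1$ and $\xi_0 \ge R$ it splits into three regimes --- and evaluating the resulting integrals and consolidating yields
\[
\int_0^{\xi_0} h(s)\, ds \;\le\; \Lambda\kappa\Bigl[e^{\xi_0 + R} - e^{\xi_0 - R} - e^R - R + 1 - 2R\xi_0 + R^2/2\Bigr].
\]
The tail $-2R\xi_0 + R^2/2$ is non-positive for $\xi_0 \ge R/4$ and can be dropped; combining with $\int_0^{\xi_0} h \ge \kappa$ reduces to $\Lambda[e^{\xi_0 + R} - e^{\xi_0 - R} - e^R - R + 1] \ge 1$, which, upon solving $e^{L}(e^R - e^{-R}) = 1/\Lambda + e^R + R - 1$, is precisely the inequality $\xi_0 \ge L$ with $L$ as in~\eqref{LESYT:Sdwoed} --- contradicting $\xi_0 \le L$.

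For the edge case $\xi_0 \in (0, R)$, the same procedure applies with a simpler (two-piece) formula for $|I(z)|$, giving $\int_0^{\xi_0} h \le \Lambda\kappa[e^{\xi_0 + R} - e^R - \xi_0(R + 1 + \xi_0/2)]$; the expression in brackets is increasing in $\xi_0$, so its value over $(0, R)$ is bounded by $e^{2R} - e^R - R - 3R^2/2 < e^R(e^R - 1) - R$, and the hypothesis $\Lambda(e^R(e^R - 1) - R) < 1$ forces $\int h < \kappa$, ruling out a zero in this range. For the general $(c, \omega)$ statement, the same scheme produces an upper bound $\int h \le \Lambda(c\kappa/\omega)\, B_{c,\omega,R}(\xi_0)$ for an explicit increasing function $B$ with $B(0) = 0$, and one takes $L$ to be the unique positive root of $B(L) = \omega/\Lambda$; its dependence on only $c, \omega, \Lambda, R$ is built into the construction of $B$.

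The main obstacle is organizational rather than conceptual: one has to keep track of the piecewise integral $\int_0^{\xi_0 + R}(e^z - 1)\, |I(z)|\, dz$ across three subintervals $[0, R]$, $[R, \xi_0 - R]$, $[\xi_0 - R, \xi_0 + R]$ (with a different splitting when $\xi_0 \in [R, 2R]$), and verify patiently that the resulting three antiderivatives consolidate into the clean expression $e^{\xi_0+R} - e^{\xi_0-R} - e^R - R + 1$ modulo a non-positive remainder --- it is precisely this consolidation that makes the closed-form logarithmic formula for $L$ appear.
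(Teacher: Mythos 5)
Your argument is correct and lands on exactly the formula \eqref{LESYT:Sdwoed}, but it takes a somewhat different route from the paper's. The paper works directly from the integrating-factor identity \eqref{SJO0-2urjthimh6587bnOSJHND-02cdwsw2rvb3e5fe4536}, namely $e^{-\omega x}v'(x)=\kappa-\int_0^x e^{-\omega\theta}\,v_+*K(\theta)\,d\theta$, bounds the convolution pointwise by $\frac{\Lambda\kappa}{\omega^2}\big(e^{\omega(\theta+R)}-e^{\omega\max\{\theta-R,0\}}\big)$ (using the same envelope $v_+(x)\le\frac{\kappa}{\omega}(e^{\omega x}-1)$ from Theorem~\ref{NONMSPT43} and the fact that $v_+\equiv 0$ on $(-\infty,0]$), and then simply reads off $v'(x)>0$ wherever the resulting function $\Phi_{\omega,\Lambda,R}(x)$ stays below $1$; no contradiction argument and no case analysis on the location of a putative zero are needed, and continuity of $\Phi$ with $\Phi(0)=0$ gives the general existence of $L$ for free. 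You instead integrate the unweighted equation from $0$ to the first zero $\xi_0$ of $v'$, obtaining the identity $\int_0^{\xi_0}v_+*K=c\kappa+\omega v(\xi_0)\ge c\kappa$, and match it with a Fubini-type upper bound; your bookkeeping is exact up to the final step where you discard $-2R\xi_0+R^2/2$, whereas the paper discards the weight $e^{-\omega\theta}$ and a linear-in-$R$ term earlier --- amusingly, both sacrifices consolidate to the same expression $\Lambda\big(e^{\xi_0}(e^R-e^{-R})-e^R-R+1\big)$, hence the same $L$. (I checked that your three-regime formula for $\int(e^z-1)|I(z)|\,dz$ is in fact also the exact value on the intermediate range $\xi_0\in[R,2R]$, so the splitting you flag there causes no trouble.) Your approach costs more case analysis but yields the slightly sharper exact identity $\int_0^{\xi_0}h=c\kappa+\omega v(\xi_0)$; the paper's is shorter and gives the open-ended statement without localizing a zero.

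One small point to tidy up: from $\int_0^{\xi_0}h\ge c\kappa$ you conclude $\xi_0\ge L$, which does not yet contradict $\xi_0\le L$ at the endpoint. Since $v'>0$ on $[0,\xi_0)$ by minimality, $v(\xi_0)>0$ strictly, so in fact $\int_0^{\xi_0}h>c\kappa$ and hence $\xi_0>L$; with that observation the contradiction is genuine. (The paper's own proof of the closed endpoint in $(-\infty,L]$ versus the open interval $[0,L)$ where $\Phi<1$ has the same flavor of imprecision, so this is cosmetic.)
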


The quantitative expression in~\eqref{LESYT:Sdwoed} is interesting, since its right-hand side diverges when either~$\Lambda\searrow0$ or~$R\searrow0$. This implies that when the interaction kernel is either of low intensity or short range, there exist traveling waves that are monotone over a very large interval extending indefinitely to the left. In fact, the interval can be arbitrarily large, provided that the intensity or range of the kernel is sufficiently small.

\subsection{Self-similar fire fronts}
Another question related to special solutions of moving fronts
deal with ``self-similar'' solutions. For instance, in the absence of
the interaction kernel~$K$, the classical heat equation~$\partial_tu=c\partial_x^2u$ exhibits solutions of the form 
\begin{equation}\label{PLOTFORFI2}\begin{split}&
u(x,t)=v\left( \lambda(t)\,x\right),\\&{\mbox{with}}\quad 
v(r)=v_0+v_1\int_0^r e^{-\frac{a\varrho^2}{2}}\,d\varrho
\quad{\mbox{and}}\quad 
\lambda(t)=\frac{\lambda_0}{\sqrt{1+2ac\lambda_0^2 t}}
,\end{split}\end{equation}
with~$\lambda_0$, $v_0$, $v_1$ and~$a\in\R$.

These solutions are defined for all~$x\in\R$ and, provided
that~$1+2ac\lambda_0^2 t>0$,
they are the only ones that possess the remarkable feature of having the ``same shape'' for all times,
up to a spatial rescaling, see Figure~\ref{fighss}.

\begin{figure}[t]
    \centering
    \includegraphics[width=0.45\textwidth]{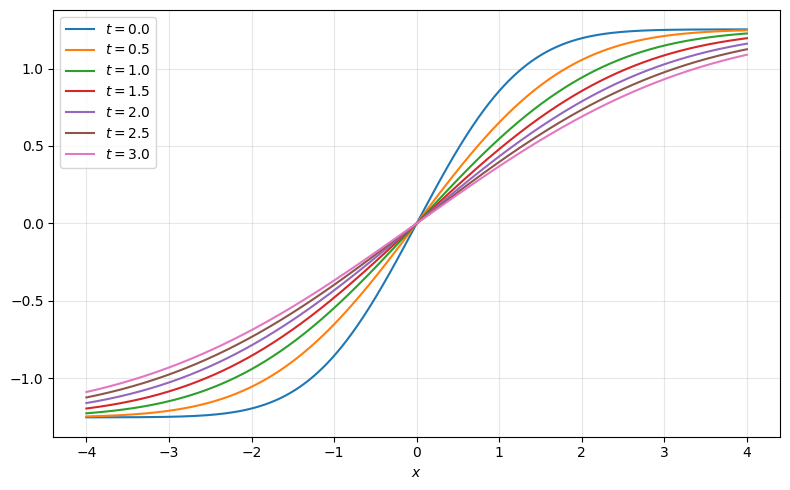}
        \includegraphics[width=0.45\textwidth]{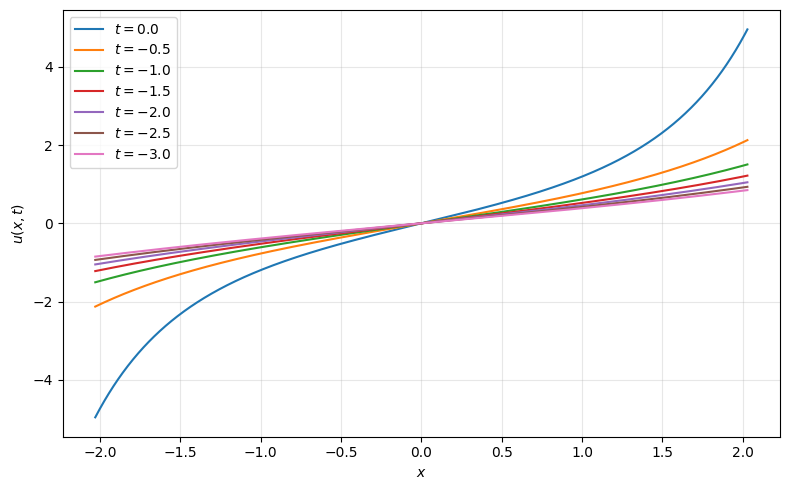}
    \caption{{Plot of the function in~\eqref{PLOTFORFI2}
    with~$c:=1$, $\lambda_0:=1$, $v_0:=0$ and~$v_1:=1$.
    Left: case~$a:=1$ and~$t\in\{0, \, 0.5,\, 1, \, 1.5,\,2,\,2.5,\,3\}$.
    Right: case~$a:=-1$ and~$t\in\{-3,\,-2.5,\,-2,\,-1.5,\,-1,\,-0.5,\,0\}$.}
    }
    \label{fighss}
\end{figure}

In the presence of an interaction kernel, in general
we cannot expect self-similar solutions of the bushfire equation (other than
the trivial ones coming from the heat equation):

\begin{theorem}[Absence of self-similar solutions]\label{sojdcmvvb-rw-SeeDeqwdf}
Let~$\ell>0$.
Let~$r_0\in\R$, $v\in C^2(\R)$ and suppose that~$v\le\Theta$ in~$(-\infty,r_0]$.

Let also~$T>0$ and~$\lambda\in C^1([0,T],(0,+\infty))$.
Suppose that either~$\lambda$ vanishes identically or it is a
non-constant function.

Suppose that~$u$ is a solution of 
\begin{equation}\label{PASD:erf} \partial_t u(x,t)=c\Delta u(x,t)+\int_{x-\ell}^{x+\ell}\big(u(y,t)-\Theta\big)_+\,dy,\end{equation}
for all~$x\in\R$ and~$t\in(0,T)$, having the form
\begin{equation}\label{PASD:erf17h} u(x,t)=v\left( \lambda(t)\,x\right).\end{equation}

Then, $u(x,t)\le\Theta$ for all~$x\in\R$ and~$t\in[0,T]$,
and the representation formula in~\eqref{PLOTFORFI2} holds true.
\end{theorem}

We stress that the classification in Theorem~\ref{sojdcmvvb-rw-SeeDeqwdf}
reduces the problem to a non-burning solution, according to the terminology
of page~\pageref{BURSO}. Namely, Theorem~\ref{sojdcmvvb-rw-SeeDeqwdf}
states that self-similar solutions of~\eqref{PASD:erf} always remain
below the ignition temperature and therefore
they merely reduce to the diffusive solutions of the heat equation
(hence, the bushfire equation~\eqref{PASD:erf}
does not possess burning solutions of self-similar type).

\subsection{Organization of the paper}
The rest of this paper is devoted to the proofs of the above results.
Specifically, Lemma~\ref{COMPAPI} is proved in Section~\ref{SEC:2},
and it is then used for the proof of 
Lemma~\ref{NECE}, as well as those of Theorems~\ref{SOPTH}, \ref{SOPTH:2}, and~\ref{DBDARYEFF},
which are contained, respectively, in Sections~\ref{SEC:3}, \ref{SOPTH:sec}, \ref{SOPTH:sec:2}, and~\ref{DBDARYEFF:SE}.

Also, 
Theorem~\ref{ABSE} is proved in Section~\ref{SEC4}
and Section~\ref{SPJD0tkjy5pu:0345} contains some useful auxiliary observations
allowing one to conveniently rewrite the equation of traveling waves.

With this,
Theorem~\ref{TRAVE} is established in Section~\ref{DS5} and Theorem~\ref{TRAVE.BIS}
in Section~\ref{DS5.bBSIS}.

The proof of Theorem~\ref{ABBOU} is contained in Section~\ref{ajopsclnFG}, 
that of Theorem~\ref{NONMSPT43} in Section~\ref{N56789-97ONMSPT43:OSIDNKF3495TI:2345T0},
that of
Theorem~\ref{ABBOU:side} in Section~\ref{oqwrufjgr0tphknwfbn},
that of Theorem~\ref{NONMSPT43.rm} in Section~\ref{2efm02emiaksMEQWfvqw01367vf2-323r},
that of Theorem~\ref{qdwsfvbolgrb:SDVb} in Section~\ref{qdwsfvbolgrb:SDVb:SEC}.

The proof of Theorem~\ref{sojdcmvvb-rw-SeeDeqwdf}
is presented in Section~\ref{sojdcmvvb-rw-SeeDeqwdf:S}.

\section{Proof of Lemma~\ref{COMPAPI}}\label{SEC:2}
Recalling assumption~\eqref{UNIKe}, we let
$$ A:=1+2\sup_{x\in\Omega}\int_{\Omega}K(x,y)\,dy.$$
We claim that
\begin{equation}\label{RIDU}
{\mbox{$u(x,t)\le v(x,t)$ for all~$x\in\Omega$ and~$t\in\displaystyle\left[0,\frac1{ A }\right]$.}}\end{equation}
To prove this, suppose, for the sake of contradiction, that the claim is not true and there exist~$x_\star\in\Omega$ and~$t_\star\in\left[0,\frac1{ A }\right]$ such that~$v(x_\star,t_\star)<u(x_\star,t_\star)$. 

We pick~$\delta>0$ sufficiently small such that
$$\delta+\delta A t_\star< u(x_\star,t_\star)-v(x_\star,t_\star) $$
and let
$$ W_\delta(x,t):=v(x,t)-u(x,t)+\delta+\delta At.$$ We remark that, for all~$x\in\Omega$,
$$ W_\delta(x,0)\ge\delta>0$$
and
$$ W_\delta(x_\star,t_\star)= v(x_\star,t_\star)-u(x_\star,t_\star)+\delta+\delta A t_\star<0.$$
By continuity, we have that~$W_\delta<0$ in a neighborhood of~$(x_\star,t_\star)$.
Hence there exists~$\tau_\delta\in(0,t_\star)$ such that~$W_\delta(x,t)\ge0$ for all~$x\in\Omega$ and~$t\in[0,\tau_\delta]$
and there exist an infinitesimal sequence~$\e_j\searrow0$ as~$j\to+\infty$ and points~$\tilde x_j\in\overline\Omega$ for which~$W_\delta(\tilde x_j,\tau_\delta+\e_j)<0$.

In particular, if~$x_j\in\overline\Omega$ is such that
\begin{equation}\label{KSmi} W_\delta(x_j,\tau_\delta+\e_j)=\min_{x\in\overline\Omega}W_\delta(x,\tau_\delta+\e_j),\end{equation}
we have that
$$ W_\delta(x_j,\tau_\delta+\e_j)\le W_\delta(\tilde x_j,\tau_\delta+\e_j)<0.$$

Up to a subsequence, we can suppose that~$x_j\to \eta_\delta$ for some~$\eta_\delta\in\overline\Omega$.
Moreover,
\begin{eqnarray*}&& v(\eta_\delta,\tau_\delta)-u(\eta_\delta,\tau_\delta)=\lim_{j\to+\infty} v(x_j,\tau_\delta+\e_j)-u(x_j,\tau_\delta+\e_j)\\&&\qquad=
\lim_{j\to+\infty} W_\delta(x_j,\tau_\delta+\e_j)-\delta-\delta A\tau_\delta\le
-\delta<0\end{eqnarray*}
and therefore~$\eta_\delta\in\Omega$.

On this account, we have that~$x_j\in\Omega$ provided that~$j$ is sufficiently large and therefore, by~\eqref{KSmi}, $x_j$ is an interior minimum for the function~$\Omega\ni x\mapsto W_\delta(x,\tau_\delta+\e_j)$, yielding that
$$ \Delta W_\delta(x_j,\tau_\delta+\e_j)\ge0,$$
and therefore
$$ \Delta W_\delta (\eta_\delta, \tau_\delta)\ge0.$$

Furthermore,
$$ -\partial_t W_\delta(\eta_\delta,\tau_\delta)=\lim_{j\to+\infty}\frac{W_\delta(\eta_\delta,\tau_\delta-\e_j)-W_\delta(\eta_\delta,\tau_\delta)}{\e_j}
=\lim_{j\to+\infty}\frac{W_\delta(\eta_\delta,\tau_\delta-\e_j)}{\e_j}
\ge0
$$
and consequently, by~\eqref{MAIN:EQ:pa3:sSop},
\begin{equation*}
\begin{split}
0&\ge\partial_t W_\delta(\eta_\delta,\tau_\delta)-c\Delta W_\delta(\eta_\delta,\tau_\delta)
\\&=\partial_t v(\eta_\delta,\tau_\delta)-\partial_t u(\eta_\delta,\tau_\delta)+\delta A
-c\Delta v(\eta_\delta,\tau_\delta)+c\Delta u(\eta_\delta,\tau_\delta)
\\&\ge
\int_{\Omega}\big(v(y,\tau_\delta)-\Theta\big)_+\,K(\eta_\delta,y)\,dy-\int_{\Omega}\big(u(y,\tau_\delta)-\Theta\big)_+\,K(\eta_\delta,y)\,dy+\delta A
\\&=\int_{\Omega}\big(W_\delta(y,\tau_\delta)+
u(y,\tau_\delta)-\delta-\delta A\tau_\delta
-\Theta\big)_+\,K(\eta_\delta,y)\,dy\\&\qquad\qquad-\int_{\Omega}\big(u(y,\tau_\delta)-\Theta\big)_+\,K(\eta_\delta,y)\,dy+\delta A\\&\ge\int_{\Omega}\big(u(y,\tau_\delta)-\delta-\delta A\tau_\delta
-\Theta\big)_+\,K(\eta_\delta,y)\,dy\\&\qquad\qquad-\int_{\Omega}\big(u(y,\tau_\delta)-\Theta\big)_+\,K(\eta_\delta,y)\,dy+\delta A\\&
\ge \big(-\delta-\delta A\tau_\delta\big)\int_{\Omega}K(\eta_\delta,y)\,dy+\delta A.
\end{split}
\end{equation*}
Dividing by~$\delta$ we thereby find that
\begin{eqnarray*}&& 1+2\sup_{x\in\Omega}\int_{\Omega}K(x,y)\,dy=
A\le (1+A\tau_\delta) \sup_{x\in\Omega}\int_{\Omega}K(x,y)\,dy
\\&&\qquad\le (1+A t_\star) \sup_{x\in\Omega}\int_{\Omega}K(x,y)\,dy
\le2\sup_{x\in\Omega}\int_{\Omega}K(x,y)\,dy,
\end{eqnarray*}
which is a contradiction.
With this, the claim in~\eqref{RIDU} is established.

Now we claim that, for every~$m\in\N\setminus\{0\}$,
\begin{equation}\label{RIDU2}
{\mbox{$u(x,t)\le v(x,t)$ for all~$x\in\Omega$ and~$t\in\displaystyle\left[0,\frac{m}{ A }\right]$.}}\end{equation}
For this, we can argue by induction.
Indeed, when~$m=1$ the claim in~\eqref{RIDU2} follows from~\eqref{RIDU}.

Suppose now that the claim in~\eqref{RIDU2} is valid for some~$m$ and let us prove it for~$m+1$. To this end, we let~$\widetilde u(x,t):=u\left(x,t+\frac{m}{ A }\right)$ and~$\widetilde v(x,t):=v\left(x,t+\frac{m}{ A }\right)$, we observe that~$\widetilde u$ and~$\widetilde v$ are also
as in~\eqref{MAIN:EQ:pa3:sSop} with~$\widetilde u(x,0)=u\left(x,\frac{m}{ A }\right)\le v\left(x,\frac{m}{ A }\right)=\widetilde v(x,0)$ for all~$x\in\Omega$, thanks to the inductive assumption, and~$\widetilde u(x,t)\le\widetilde v(x,t)$ for all~$x\in\partial\Omega$ and~$t\in[0,+\infty)$, thanks to the assumptions in Lemma~\ref{COMPAPI}. 

Hence, we can apply~\eqref{RIDU} to~$\widetilde u$ and~$\widetilde v$, concluding that, for all~$x\in\Omega$ and~$t\in\displaystyle\left[0,\frac1{ A }\right]$,
$$ u\left(x,t+\frac{m}{ A }\right)=\widetilde u(x,t)\le\widetilde v(x,t)=v\left(x,t+\frac{m}{ A }\right),$$
from which~\eqref{RIDU2} follows.

The claim in Lemma~\ref{COMPAPI} is now a consequence of~\eqref{RIDU2}.~\hfill \qedsymbol

\section{Proof of Lemma~\ref{NECE}}\label{SEC:3}
We observe that~$v(x,t):=\Theta$ is a solution of~\eqref{MAIN:EQ:pa3} such that~$u(x,0)\le\Theta= v(x,0)$
for all~$x\in\Omega$ and~$u(x,t)\le\Theta=v(x,t)$ for all~$x\in\partial\Omega$ and~$t\in[0,+\infty)$. The desired result then follows from Lemma~\ref{COMPAPI}.~\hfill \qedsymbol

\section{Proof of Theorem~\ref{SOPTH}}\label{SOPTH:sec}
Let
$$ \underline{u}(x,t):= \lambda_0 e^{\alpha t} (1-|x|^2),$$
with~$\alpha>0$ for us to choose conveniently small in what follows.

Notice that, if~$x\in\partial B_1$ and~$t\in[0,+\infty)$,
\begin{equation}\label{CPDFF:DIZ3}
\underline{u}(x,t)=0=u(x,t).
\end{equation}
In addition, by~\eqref{CPDFF:DIZ2}, for all~$x\in B_1$,
\begin{equation}\label{CPDFF:DIZ4}
\underline{u}(x,0)=\lambda_0 (1-|x|^2)
\le u(x,0).
\end{equation}

We also have that, for all~$x\in B_1$ and~$t\in(0,+\infty)$,
\begin{equation}\label{PSDSDVFGRGFBEFSDV}
\partial_t \underline u(x,t)-c\Delta\underline u(x,t)=
\lambda_0 e^{\alpha t}
\big( \alpha (1-|x|^2)+2nc \big).
\end{equation}

In addition, given~$\tau\in(0,1)$, for all~$y\in B_{\tau}$,
\begin{eqnarray*}&&
\underline u(y,t)-\Theta\ge \lambda_0 e^{\alpha t} (1-\tau^2)
-\Theta=(\lambda_0-\Theta) e^{\alpha t} (1-\tau^2)
+\Theta \big(e^{\alpha t} (1-\tau^2)-1\big)\\&&\qquad\ge
(\lambda_0-\Theta) e^{\alpha t} (1-\tau^2)
+\Theta \big( (1-\tau^2)-1\big)
= (\lambda_0-\Theta) e^{\alpha t} (1-\tau^2)
-\Theta \tau^2,
\end{eqnarray*}
which is nonnegative as long as~$(\lambda_0-\Theta) (1-\tau^2)
\ge\Theta \tau^2$, and 
this is warranted if we choose
$$\tau:= \sqrt{\frac{\lambda_0 - \Theta}{2\lambda_0}}.$$

In this way, we have found that
\begin{equation*}
\begin{split}&
\int_{B_1}\big(\underline u(y,t)-\Theta\big)_+\,K(x,y)\,dy\ge
\int_{B_\tau}\big(\underline u(y,t)-\Theta\big)_+\,K(x,y)\,dy\\&\qquad\ge
\int_{B_\tau}
\Big( (\lambda_0-\Theta) e^{\alpha t} (1-\tau^2)
-\Theta \tau^2\Big)
\,K(x,y)\,dy\\&\qquad=
\int_{B_{\sqrt{\frac{\lambda_0 - \Theta}{2\lambda_0}}}}\left(
\frac{(\lambda_0-\Theta)\Theta}{2\lambda_0} \big(e^{\alpha t}-1\big)+\frac{(\lambda_0-\Theta)e^{\alpha t}}{2}\right)
\,K(x,y)\,dy.
\end{split}
\end{equation*}
Hence, by~\eqref{CPDFF:DIZ0},
\begin{equation*}
\int_{B_1}\big(\underline u(y,t)-\Theta\big)_+\,K(x,y)\,dy\ge
c_0 \big(e^{\alpha t}-1\big)+c_1 e^{\alpha t},
\end{equation*}
for some~$c_0$, $c_1>0$ depending only on~$\lambda_0 $, $\Theta$, and~$K$.

By combining this information and~\eqref{PSDSDVFGRGFBEFSDV} it follows that
\begin{eqnarray*}
&&\partial_t \underline u(x,t)-c\Delta\underline u(x,t)-
\int_{B_1}\big(\underline u(y,t)-\Theta\big)_+\,K(x,y)\,dy\\
&&\qquad\le
\lambda_0 e^{\alpha t}
\big( \alpha (1-|x|^2)+2nc \big)
-c_0 \big(e^{\alpha t}-1\big)-c_1 e^{\alpha t}\\
&&\qquad=
\Big(\lambda_0 \big( \alpha (1-|x|^2) +2nc\big)-c_0-c_1\Big) e^{\alpha t}
+c_0 \\&&\qquad\le
\Big(\lambda_0 (\alpha+2nc)-c_0-c_1\Big) e^{\alpha t}+c_0 .
\end{eqnarray*}

Now, if~$\alpha$ is chosen conveniently small
(depending only on~$\lambda_0 $, $\Theta$, and~$K$), we can suppose that~$\alpha\le\frac{c_1}{4\lambda_0}$. Also, if~$c$ is 
chosen conveniently small
(depending only on~$n$, $\lambda_0 $, $\Theta$, and~$K$), we can suppose
that~$c\le\frac{c_1}{8n\lambda_0}$.
With these choices, we have that~$
\lambda_0 (\alpha+2nc)-c_0-c_1<0$, and therefore
\begin{eqnarray*}
&&\partial_t \underline u(x,t)-c\Delta\underline u(x,t)-
\int_{B_1}\big(\underline u(y,t)-\Theta\big)_+\,K(x,y)\,dy\\
&&\qquad\le
\Big(\lambda_0 (\alpha+2nc)-c_0-c_1\Big) +c_0 \\&&\qquad=
\lambda_0 (\alpha+2nc)-c_1\\&&\qquad=
-\frac{c_1}2 .
\end{eqnarray*}
Thanks to this inequality, \eqref{CPDFF:DIZ3}, and~\eqref{CPDFF:DIZ4},
we can employ the Comparison Principle in Lemma~\ref{COMPAPI} and conclude that
$$ u(x,t)\ge\underline u(x,t)=\lambda_0 e^{\alpha t} (1-|x|^2),$$
as desired.~\hfill \qedsymbol

\section{Proof of Theorem~\ref{SOPTH:2}}\label{SOPTH:sec:2}
Let~$\alpha:=2nc-C>0$, thanks to~\eqref{CPDFF:DIZ2.b:2}, and define
$$ \overline{u}(x,t):= \lambda_0 e^{-\alpha t} (1-|x|^2).$$
We point out that that, if~$x\in\partial B_1$ and~$t\in[0,+\infty)$,
\begin{equation}\label{CPDFF:DIZ3:2}
\overline{u}(x,t)=0=u(x,t)
\end{equation}
and, by means of~\eqref{CPDFF:DIZ2:2}, for all~$x\in B_1$,
\begin{equation}\label{CPDFF:DIZ4:2}
\overline{u}(x,0)=\lambda_0 (1-|x|^2)
\ge u(x,0).
\end{equation}

Besides, for all~$x\in B_1$ and~$t\in(0,+\infty)$,
\begin{equation}\label{odnfwepoegihpwerhkngb}
\partial_t \overline u(x,t)-c\Delta\overline u(x,t)=
\lambda_0 e^{-\alpha t}
\big( 2nc -\alpha (1-|x|^2)\big).
\end{equation}

Furthermore, by~\eqref{CPDFF:DIZ} and the monotonicity of the function~$\R\ni r\mapsto r_+$,
we have that
\begin{eqnarray*}
&&\int_{B_1}\big(\overline u(y,t)-\Theta\big)_+\,K(x,y)\,dy\le
\int_{B_1}\overline u_+(y,t)\,K(x,y)\,dy\\&&\qquad
=\lambda_0 e^{-\alpha t} \int_{B_1}(1-|y|^2)\,K(x,y)\,dy\le C
\lambda_0 e^{-\alpha t},
\end{eqnarray*}
where~\eqref{ihsdcknvAS234MSsijsmdvlpsd} has been used in the latter inequality.

Hence, recalling~\eqref{odnfwepoegihpwerhkngb}, \begin{eqnarray*}&&
\partial_t \overline u(x,t)-c\Delta\overline u(x,t)-\int_{B_1}\big(\overline u(y,t)-\Theta\big)_+\,K(x,y)\,dy\\&&\qquad\ge
\lambda_0 e^{-\alpha t}
\big( 2nc -\alpha (1-|x|^2)-C\big)\\&&\qquad\ge
\lambda_0 e^{-\alpha t}
\big( 2nc -\alpha-C\big)=0.
\end{eqnarray*}
This, \eqref{CPDFF:DIZ3:2}, and~\eqref{CPDFF:DIZ4:2},
combined with the Comparison Principle in Lemma~\ref{COMPAPI}, entail that
$$ u(x,t)\le\overline u(x,t)=\lambda_0 e^{-\alpha t} (1-|x|^2),$$
as desired.~\hfill \qedsymbol

\section{Proof of Theorem~\ref{DBDARYEFF}}\label{DBDARYEFF:SE}
We define
$$\underline u(x,t):=\overline\Theta-(\beta-\alpha t)(1-|x|^2),$$
with~$\alpha>0$ as in~\eqref{CASDF:d2}.

We notice that, for all~$x\in B_1$,
\begin{equation}\label{sdacdCSXvSihfDbk-90ieRrRjf-01}
\underline u(x,0)=\overline\Theta-\beta(1-|x|^2)\le u(x,0),\end{equation}  thanks to~\eqref{CPDFF:	qweDIZ2:BDCO}.

Besides, for all~$x\in\partial B_1$ and~$t\in[0,+\infty)$, 
\begin{equation}\label{sdacdCSXvSihfDbk-90ieRrRjf-02}
\underline u(x,t)=\overline\Theta= u(x,t).\end{equation}

Furthermore,
\begin{equation*}
\partial_t\underline u(x,t)-c\Delta\underline u(x,t)=
\alpha (1-|x|^2)-2nc(\beta-\alpha t).
\end{equation*}
We also remark that, when~$t\in[0,t_\star]$, it holds that
\begin{equation*}
\beta-\alpha t\ge\overline\Theta-\Theta>0.
\end{equation*}
As a consequence,
\begin{eqnarray*}&&
\partial_t\underline u(x,t)-c\Delta\underline u(x,t)-\int_{B_1}\big(\underline u(y,t)-\Theta\big)_+\,K(x,y)\,dy\\&&\qquad\le
\alpha (1-|x|^2)-2nc(\beta-\alpha t)\\&&\qquad\le
\alpha-2nc(\overline\Theta-\Theta).
\end{eqnarray*}

This and~\eqref{CASDF:d2} yield that
$$\partial_t\underline u(x,t)-c\Delta\underline u(x,t)-\int_{B_1}\big(\underline u(y,t)-\Theta\big)_+\,K(x,y)\,dy\le0.$$
Thus, recalling~\eqref{sdacdCSXvSihfDbk-90ieRrRjf-01} and~\eqref{sdacdCSXvSihfDbk-90ieRrRjf-02},
we can utilize the Comparison Principle in Lemma~\ref{COMPAPI} and conclude that~$u(x,t)\ge\underline u(x,t)$
for all~$x\in B_1$ and~$t\in[0,t_\star]$. This establishes the claim in~\eqref{CPDFF:DIZ2.dwafsger2}, which\footnote{For completeness, we point out that if
$$\overline u(x,t):=\overline\Theta+at,$$
with~$a>0$ and~$t\in[0,1]$, if the interaction kernel is bounded by a small quantity~$\e$ we have that
\begin{equation*}
\begin{split}
&\int_{B_1}\big(\overline u(y,t)-\Theta\big)_+\,K(x,y)\,dy\le
C\e\,(\overline\Theta-\Theta+at)\le C\e\,(\overline\Theta-\Theta+a),
\end{split}
\end{equation*}for some~$C>0$, and thus
\begin{eqnarray*}&&
\partial_t\overline u(x,t)-c\Delta\overline u(x,t)-\int_{B_1}\big(\overline u(y,t)-\Theta\big)_+\,K(x,y)\,dy\\&&\qquad\ge
a-C\e\,(\overline\Theta-\Theta+a)\ge0,
\end{eqnarray*}
as long as~$a\ge\frac{C\e\,(\overline\Theta-\Theta+a)}{1-C\e}$.

In this situation, for a solution~$u$ with initial datum below~$\overline\Theta$,
the Comparison Principle in Lemma~\ref{COMPAPI} would have returned that~$u(x,t)\le \overline u(x,t)$
for all~$x\in B_1$ and~$t\in[0,1]$.\label{SHARP-CPDFF:DIZ2.dwafsger2}

This shows that, in general, the linear growth rate obtained in~\eqref{CPDFF:DIZ2.dwafsger2}
is sharp and cannot be improved.}
in turn implies~\eqref{CPDFF:DIZ2.c.bdary} as well.~\hfill \qedsymbol

\section{Proof of Theorem~\ref{ABSE}}\label{SEC4}
Suppose that there exists a solution of~\eqref{MAIN:EQ:pa3} in the form given by~\eqref{AB:e}.
Then, by~\eqref{MAIN:EQ:pa3}, for all~$x\in\Omega$ and~$t\in(0,+\infty)$,
\begin{equation}\label{PI02}
\begin{split}&-\beta=
\partial_t u(x,t)=c\Delta u(x,t)+\int_{\Omega}\big(u(y,t)-\Theta\big)_+\,K(x,y)\,dy
\\&\qquad\qquad= c\Delta v(x)+\int_{\Omega}\big(v(y)-\beta t-\Theta\big)_+\,K(x,y)\,dy.\end{split}\end{equation}

Now, if~$\beta>0$ we pick~$t\ge \frac{\|v\|_{L^\infty(\Omega)}-\Theta}{\beta}$ and deduce from~\eqref{PI02} that, for all~$x\in\Omega$,
$$ c\Delta v(x)=-\beta.$$
Plugging this information back into~\eqref{PI02}, we find that
\begin{equation}\label{AlT7i2TnoelKmde}\int_{\Omega}\big(v(y)-\beta t-\Theta\big)_+\,K(x,y)\,dy=0.\end{equation}

Now, we prove that
\begin{equation}\label{UNOTBUG}
{\mbox{$u$ is not a burning solution.}}
\end{equation}
To prove this, we argue for the sake of contradiction and suppose that
there exist~$p_\star\in\Omega$ and~$t_\star\in(0,+\infty)$ such that~$v(p_\star)-\beta t_\star>\Theta$.
Then, by continuity, we find~$b>0$ and a small ball~$B$ such that~$p\in B\subseteq\Omega$ and~$v(y)-\beta t_\star-\Theta\ge b$ for all~$y\in B$.

This and~\eqref{AlT7i2TnoelKmde} give that
$$0\ge\int_{B}\big(v(y)-\beta t-\Theta\big)_+\,K(x,y)\,dy\ge b\int_{B}K(x,y)\,dy.$$
Hence, owing to~\eqref{NONDEKE5}, there exists~$x_B\in\Omega$ such that
$$0\ge b\int_{B}K(x_B,y)\,dy>0,$$
which is a contradiction, and the proof of~\eqref{UNOTBUG} is complete.

As a result, to have a burning solution, necessarily~$\beta\le0$.
In this situation, we infer from~\eqref{PI02} that for all~$x\in\Omega$ and~$T>t>0$,
\begin{equation*}
\begin{split}0&=
-\beta-c\Delta v(x)+\beta+c\Delta v(x)\\& =
\int_{\Omega}\big(v(y)-\beta T-\Theta\big)_+\,K(x,y)\,dy
-\int_{\Omega}\big(v(y)-\beta t-\Theta\big)_+\,K(x,y)\,dy
\\&=
\int_{\Omega} \Big(\big(v(y)+|\beta| T-\Theta\big)_+ -\big(v(y)+|\beta| t-\Theta\big)_+\Big)\,K(x,y)\,dy
\end{split}\end{equation*}
and therefore, by the monotonicity of the integrand in the time variable,
\begin{equation*}
\big(v(y)+|\beta| T-\Theta\big)_+ =\big(v(y)+|\beta| t-\Theta\big)_+.
\end{equation*}

This entails that, for all~$y\in\Omega$ and~$T>0$,
\begin{equation*}
\big(v(y)+|\beta| T-\Theta\big)_+ =\big(v(y)-\Theta\big)_+.
\end{equation*}
This and the monotonicity involved give a contradiction unless~$\beta=0$, from which the claim in Theorem~\ref{ABSE} follows.~\hfill \qedsymbol

\section{Some auxiliary observations}\label{SPJD0tkjy5pu:0345}
In this section we rephrase the notion of traveling wave solution
in a form which is suitable for the proofs of Theorems~\ref{TRAVE} and~\ref{TRAVE.BIS}.
The idea is to combine integration and extension method to reduce the problem to
a fixed-point argument in a convenient (not standard) functional space.

\begin{lemma}\label{lSJD}
The following conditions are equivalent:
\begin{itemize}
\item The function
\begin{equation}\label{ChCSKJ.oqdjf34rg0hn.354} u(x,t)=v(x+\omega t)+\Theta,\end{equation}
with~$v:\R\to\R$,
is a solution of~\eqref{MAIN:EQ:pa3-TW},
\item $v$ is a solution of
\begin{equation}\label{ChCSKJ.oqdjf34rg0hn.35}\omega v'-cv''= v_+*K.\end{equation}
\end{itemize}
\end{lemma}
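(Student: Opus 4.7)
The plan is to verify the equivalence by direct substitution, reducing the PDE to a one-variable ODE via the traveling wave ansatz. Both implications follow from the same calculation, so the main task is simply to run the change of variables carefully.

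First I would introduce the moving coordinate $\xi := x + \omega t$ and apply the chain rule to $u(x,t) = v(\xi) + \Theta$. This gives $\partial_t u(x,t) = \omega\, v'(\xi)$ and $\partial_x^2 u(x,t) = v''(\xi)$, so the left-hand side $\partial_t u - c\partial_x^2 u$ of equation~\eqref{MAIN:EQ:pa3-TW} becomes $\omega v'(\xi) - c v''(\xi)$.

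Next I would handle the nonlocal term. Since $(u(y,t)-\Theta)_+ = (v(y+\omega t))_+ = v_+(y+\omega t)$, the integral in~\eqref{MAIN:EQ:pa3-TW} is
\begin{equation*}
\int_{\R} v_+(y+\omega t)\, K(x-y)\,dy.
\end{equation*}
Performing the substitution $z := y + \omega t$, so that $x - y = (x+\omega t) - z = \xi - z$ and $dz = dy$, yields
\begin{equation*}
\int_{\R} v_+(z)\, K(\xi - z)\,dz = (v_+ * K)(\xi).
\end{equation*}

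Combining these computations, equation~\eqref{MAIN:EQ:pa3-TW} at the point $(x,t)$ is exactly
\begin{equation*}
\omega v'(\xi) - c v''(\xi) = (v_+ * K)(\xi),
\end{equation*}
evaluated at $\xi = x+\omega t$. Since the map $(x,t) \mapsto x+\omega t$ is surjective onto $\R$, requiring this identity for all $(x,t) \in \R \times \R$ is equivalent to requiring it for all $\xi \in \R$, which is precisely~\eqref{ChCSKJ.oqdjf34rg0hn.35}. There is no real obstacle here: the argument is a direct chain-rule and translation-of-variables computation, and the surjectivity observation ensures the equivalence runs in both directions.
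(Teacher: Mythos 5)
Your proposal is correct and follows essentially the same route as the paper: a chain-rule computation for the local terms plus the translation substitution $z=y+\omega t$ in the convolution, with the surjectivity of $(x,t)\mapsto x+\omega t$ supplying the equivalence in both directions (the paper simply writes out the two implications separately).
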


\begin{proof} On the one hand, we rewrite \eqref{MAIN:EQ:pa3-TW} in the form
\begin{eqnarray*}&&\omega
v'(x+\omega t)=
\partial_t u(x,t)=c\partial_x^2 u(x,t)+\int_{\R}\big(u(y,t)-\Theta\big)_+\,K(x-y)\,dy\\&&\qquad
=cv''(x+\omega t)+\int_{\R}v_+(y+\omega t)\,K(x-y)\,dy\\&&\qquad
=cv''(x+\omega t)+\int_{\R}v_+(Y)\,K(x+\omega t-Y)\,dY
,\end{eqnarray*}
and then
\begin{equation*}\omega
v'(x)=cv''(x)+\int_{\R}v_+(y)\,K(x-y)\,dy=cv''(x)+v_+*K(x),
\end{equation*}which gives~\eqref{ChCSKJ.oqdjf34rg0hn.35}.

On the other hand, if~$v$ solves~\eqref{ChCSKJ.oqdjf34rg0hn.35} and~$u$ is as in~\eqref{ChCSKJ.oqdjf34rg0hn.354}, then
\begin{eqnarray*}&&
\partial_tu(x,t)-c\partial_x^2 u(x,t)=\omega v'(x+\omega t)-cv''(x+\omega t)
=v_+*K(x+\omega t)\\&&\,=\int_\R v_+(y)\,K(x+\omega t-y)\,dy
=\int_\R v_+(Y+\omega t)\,K(x-Y)\,dY\\&&\;=
\int_\R \big(u(Y,t)-\Theta\big)_+\,K(x-Y)\,dY,
\end{eqnarray*}
which entails that~$u$ is a solution of~\eqref{MAIN:EQ:pa3-TW}.\end{proof}

For finite-range interaction kernels, it actually
suffices to solve~\eqref{ChCSKJ.oqdjf34rg0hn.35} in~$[-2R,+\infty)$,
since one can then proceed with an extension method. The precise result goes as follows:

\begin{lemma}\label{EXT:v}
Assume~\eqref{LAMB}. Suppose that~$v\in C^2([-2R,+\infty))$ is a solution of~\eqref{ChCSKJ.oqdjf34rg0hn.35} in~$[-2R,+\infty)$
with~$v(x)\le0\le v'(x)$ for all~$x\in[-2R,0]$.

Then, one can extend~$v$ to a solution of~\eqref{ChCSKJ.oqdjf34rg0hn.35} in the whole of~$\R$.
\end{lemma}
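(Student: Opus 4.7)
The plan is to exploit two features working in tandem: by \eqref{LAMB} the kernel $K$ has support of half-width at most $R$, while $v$ is hypothesized to be non-positive on an interval $[-2R,0]$ of length exactly $2R$. Together these make the nonlocal right-hand side of \eqref{ChCSKJ.oqdjf34rg0hn.35} vanish identically on $(-\infty,-2R]$, and so the extension problem effectively reduces to solving the homogeneous linear ODE $\omega w'=cw''$.

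Concretely, I would first define the extension on $(-\infty,-2R]$ as the unique solution $\tilde v(x)=A+Be^{\omega x/c}$ of $\omega w'-cw''=0$ with Cauchy data $\tilde v(-2R)=v(-2R)$ and $\tilde v'(-2R)=v'(-2R)$. An immediate computation gives $B=\tfrac{c}{\omega}v'(-2R)\,e^{2R\omega/c}$, so the hypothesis $v'(-2R)\ge 0$ forces $B\ge 0$ and hence $\tilde v$ is non-decreasing on $(-\infty,-2R]$. Combined with $v(-2R)\le 0$ this yields $\tilde v\le 0$, and therefore $\tilde v_+\equiv 0$, throughout $(-\infty,-2R]$.

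I would then verify that the glued function (still denoted $v$) satisfies \eqref{ChCSKJ.oqdjf34rg0hn.35} on all of $\R$. For $x\le -2R$, the convolution $v_+*K(x)$ only sees $y\in(x-R,x+R)\subseteq(-\infty,-R]$; on that range $v_+\equiv 0$, using the extension for $y\le -2R$ and the hypothesis $v\le 0$ on $[-2R,0]$ for $y\in[-2R,-R]$. So the right-hand side is zero, matching the ODE satisfied by $\tilde v$. For $x\ge -2R$, the contribution to $v_+*K(x)$ coming from $y< -2R$ vanishes for the same reason, so the full-line convolution coincides with the one appearing in the original hypothesis, and the equation continues to hold.

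The delicate point, which I expect to be the only subtle step, is $C^2$ matching at the gluing point $x=-2R$. The values of $v$ and $v'$ match by construction, but $v''$ must match as well. This is resolved by the same cancellation used above: evaluating the original equation at $x=-2R$ from the right, the convolution term is zero (by the vanishing argument of the previous paragraph), whence $v''(-2R)=(\omega/c)\,v'(-2R)$, which is exactly the value produced by the extension $\tilde v$. Once this matching is in place, the extended function is $C^2(\R)$ and solves \eqref{ChCSKJ.oqdjf34rg0hn.35} everywhere, completing the proof.
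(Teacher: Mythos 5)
Your proposal is correct and follows essentially the same route as the paper: extend $v$ on $(-\infty,-2R)$ by the solution of the homogeneous equation $\omega w'-cw''=0$ with matching Cauchy data at $-2R$, observe that the sign hypotheses force the extension to stay nonpositive, and then use the support condition \eqref{LAMB} to see that $v_+*K$ vanishes for $x<-R$, which both validates the extension and gives the $C^2$ matching at the gluing point. A minor remark in your favor: you correctly write the homogeneous solution as $A+Be^{\omega x/c}$, whereas the paper's explicit formula uses $e^{\omega(x+2R)}$ and thus tacitly relies on the normalization $c=1$ made elsewhere in the paper.
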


\begin{proof}Suppose that~$v\in C^2([-2R,+\infty))$ is as in the statement
of Lemma~\ref{EXT:v} and consider the following extension:
for all~$x\in(-\infty,-2R)$, let
\begin{equation}\label{EXT:v132r3we} v(x):=\frac{v'(-2R)\,e^{\omega (x+2R)}}\omega +v(-2R)-\frac{v'(-2R)}\omega.\end{equation}
Note that
$$ \lim_{x\searrow-2R}v(x)=\lim_{x\nearrow-2R}v(x)\quad{\mbox{ and }}\quad
\lim_{x\searrow-2R}v'(x)=\lim_{x\nearrow-2R}v'(x),$$
giving that\begin{equation}\label{EXT:v132r3.304}
v\in C^1(\R).\end{equation}

In addition, we have that~$\frac{v'(-2R)}\omega\ge0$ and~$v(-2R)\le0$, therefore
\begin{equation}\label{EXT:v132r3}
{\mbox{$v\le0$ and~$v'\ge0$ in~$(-\infty,0]$.}}\end{equation}

We also observe that, since~$v\le0$ in~$(-\infty,0]$, we have that, for all~$x<-R$,
\begin{equation*}\begin{split}& v_+*K(x)=
\int_{\{ y\in(-R,R)\cap(-\infty,x)\}} v_+(x-y)\,K(y)\,dy\\&\qquad=\int_{\{ y\in\varnothing\}} v_+(x-y)\,K(y)\,dy=0.
\end{split}
\end{equation*}

Hence, since~$v$ satisfies~\eqref{ChCSKJ.oqdjf34rg0hn.35} in~$[-2R,+\infty)$, we see that, for all~$x\in[-2R,-R)$,
$$ \omega v'(x)-cv''(x)= v_+*K(x)=0,$$
and the same holds true for all~$x\in(-\infty,-2R)$, thanks to~\eqref{EXT:v132r3we} and~\eqref{EXT:v132r3}.
This, combined with~\eqref{EXT:v132r3.304}, yields that~$v\in C^2(\R)$ is a solution of~\eqref{ChCSKJ.oqdjf34rg0hn.35}
in all~$\R$ and the proof of the desired result is thereby complete.
\end{proof}

\begin{corollary}\label{coISPK}
Assume~\eqref{LAMB} and that~$c=1$. Suppose that there exists~$v\in C([-2R,+\infty))$ such that, for all~$x\in[-2R,+\infty)$,
\begin{equation}\label{FIXP}
v(x)=\int_0^x e^{\omega\xi}\left(\kappa-\int_0^\xi e^{-\omega\theta} v_+*K(\theta)\,d\theta\right)\,d\xi.
\end{equation}

Then, $v(0)=0$ and~$v'(0)=\kappa$. 

Also, for all~$x\in[-2R,0]$, we have that~$v(x)\le0\le v'(x)$.

Moreover,
$v$ can be extended to a function in~$C^2(\R)$ which solves~\eqref{ChCSKJ.oqdjf34rg0hn.35} in the whole of~$\R$,
and the function~$u$ defined in~\eqref{ChCSKJ.oqdjf34rg0hn.354}
is a solution of~\eqref{MAIN:EQ:pa3-TW}.
\end{corollary}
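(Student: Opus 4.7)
The plan is to take the integral equation \eqref{FIXP} as my starting point and read off, by direct differentiation, all the qualitative information and the regularity needed in order to apply Lemmas~\ref{lSJD} and~\ref{EXT:v}. Setting
\[ A(x) := \int_0^x e^{-\omega\theta}\,(v_+ * K)(\theta)\,d\theta, \]
so that \eqref{FIXP} reads $v(x) = \int_0^x e^{\omega\xi}\bigl(\kappa - A(\xi)\bigr)\,d\xi$, the identities $v(0) = 0$ and $v'(0) = e^{0}\bigl(\kappa - A(0)\bigr) = \kappa$ are immediate. The convolution $v_+ * K$ is interpreted by extending $v_+$ by zero below $-2R$, a convention that is consistent with the extension Lemma~\ref{EXT:v} will eventually produce.

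Next I would prove the sign conditions on $[-2R, 0]$. Since $v_+ \ge 0$ and $K \ge 0$, the integrand defining $A$ is nonnegative, so $A$ is nondecreasing with $A(0) = 0$; consequently $A(x) \le 0$ for $x \in [-2R, 0]$. Therefore $v'(x) = e^{\omega x}\bigl(\kappa - A(x)\bigr) \ge \kappa\,e^{\omega x} > 0$ on $[-2R, 0]$, and combined with $v(0) = 0$ this forces $v(x) \le 0$ throughout $[-2R, 0]$.

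I then differentiate a second time to verify that $v$ satisfies the ODE on $[-2R, +\infty)$. The map $v_+ * K$ is continuous (by continuity of $v_+$ together with the boundedness and compact support of $K$ granted by \eqref{LAMB}), so $A \in C^1$ and hence $v \in C^2([-2R, +\infty))$, with
\[ v''(x) = \omega e^{\omega x}\bigl(\kappa - A(x)\bigr) - e^{\omega x}\,e^{-\omega x}(v_+ * K)(x) = \omega v'(x) - (v_+ * K)(x), \]
which, since $c = 1$, is exactly \eqref{ChCSKJ.oqdjf34rg0hn.35}. At this point Lemma~\ref{EXT:v} applies, upgrading $v$ to a $C^2(\R)$ solution of \eqref{ChCSKJ.oqdjf34rg0hn.35} on the whole real line, and Lemma~\ref{lSJD} then converts this into the statement that $u(x,t) = v(x+\omega t) + \Theta$ solves \eqref{MAIN:EQ:pa3-TW}.

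I do not anticipate a genuine obstacle here: the corollary is essentially the unpacking of \eqref{FIXP}, with the substantive content already absorbed by Lemmas~\ref{lSJD} and~\ref{EXT:v}. The only point requiring a touch of care is making unambiguous sense of the convolution $v_+ * K$ when $v$ is a priori defined only on $[-2R, +\infty)$; as noted, extending $v_+$ by zero is both natural and fully compatible with the extension produced in Lemma~\ref{EXT:v}.
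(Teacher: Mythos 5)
Your proposal is correct and follows essentially the same route as the paper: reading off $v(0)=0$ and $v'(0)=\kappa$ by inspection, differentiating \eqref{FIXP} once to get $v'\ge0$ (hence $v\le0$) on $[-2R,0]$ and twice to recover \eqref{ChCSKJ.oqdjf34rg0hn.35} on $[-2R,+\infty)$, and then invoking Lemmas~\ref{EXT:v} and~\ref{lSJD}. Your explicit convention of extending $v_+$ by zero below $-2R$ to make sense of the convolution is a reasonable clarification of a point the paper leaves implicit, and it is consistent with the extension ultimately produced by Lemma~\ref{EXT:v}.
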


\begin{proof} By direct inspection, we have that~$v(0)=0$ and~$v'(0)=\kappa$.

Moreover, if~$v$ solves~\eqref{FIXP}, we observe that~$v$ is twice differentiable in~$[-2R,+\infty)$, with
\begin{equation}\label{ChCSKJ} v'(x)=e^{\omega x}\left(\kappa-\int_0^x e^{-\omega\theta} v_+*K(\theta)\,d\theta\right)\end{equation}
and
\begin{equation*} v''(x)=\omega e^{\omega x}\left(\kappa-\int_0^x e^{-\omega\theta} v_+*K(\theta)\,d\theta\right)
- v_+*K(x),\end{equation*}
from which we obtain that~$v$ solves~\eqref{ChCSKJ.oqdjf34rg0hn.35} in~$[-2R,+\infty)$.

Also, as a byproduct of~\eqref{ChCSKJ}, for all~$x\in[-2R,0]$,
$$v'(x)=e^{\omega x}\left(\kappa+\int^0_x e^{-\omega\theta} v_+*K(\theta)\,d\theta\right)\ge0.$$
As a result, for all~$x\in[-2R,0]$,
$$ v(x)=v(x)-v(0)=-\int_x^0v'(\xi)\,d\xi\le0.$$

Hence, in light of Lemma~\ref{EXT:v}, we can extend~$v$ to the whole of~$\R$ satisfying~\eqref{ChCSKJ.oqdjf34rg0hn.35}.
This and Lemma~\ref{lSJD} yield the desired result.
\end{proof}

It is also interesting to observe that~\eqref{FIXP} completely identifies all the traveling waves, since,
up to suitable
translations, the expression found in~\eqref{FIXP} is the only
possible for traveling waves:

\begin{lemma} Let~$c=1$ and~$v$ be a solution of~\eqref{ChCSKJ.oqdjf34rg0hn.35} in the whole of~$\R$. 

Then, for all~$x$, $x_0\in\R$,
\begin{equation}\label{SJO0-2urjthimh6587bnOSJHND-02cdwsw2rvb3e5fe4536}
v'(x)=e^{\omega x}\left( e^{-\omega x_0} v'(x_0)-\int_{x_0}^x e^{-\omega\theta} v_+*K(\theta)\,d\theta\right)
\end{equation}
and
\begin{equation}\label{FIXP-NEC}\begin{split}
v(x)&=v(x_0)+\int_{x_0}^x e^{\omega \xi}\left( e^{-\omega x_0}v'(x_0)-\int_{x_0}^\xi e^{-\omega\theta} v_+*K(\theta)\,d\theta\right)\,d\xi\\&=v(x_0)+\frac{(e^{\omega (x-x_0)}-1)\,v'(x_0)}\omega
-\frac1\omega\int_{x_0}^x (e^{\omega(x-\theta)}-1) \,v_+*K(\theta)\,d\theta
.\end{split}\end{equation}
\end{lemma}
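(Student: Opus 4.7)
The plan is to treat \eqref{ChCSKJ.oqdjf34rg0hn.35} with $c=1$ as a first-order linear ODE in the unknown $w:=v'$, namely
\begin{equation*}
w'(x) - \omega\, w(x) = -\,v_+\!*K(x),
\end{equation*}
and then recover $v$ by a further integration.

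First, I would multiply the ODE by the integrating factor $e^{-\omega x}$, obtaining
\begin{equation*}
\frac{d}{dx}\!\left(e^{-\omega x} w(x)\right) = -\,e^{-\omega x}\,v_+\!*K(x).
\end{equation*}
Integrating this identity between $x_0$ and $x$ and solving for $w(x)=v'(x)$ immediately yields \eqref{SJO0-2urjthimh6587bnOSJHND-02cdwsw2rvb3e5fe4536}. Note that $v_+\!*K$ is continuous (since $v\in C^2(\R)$ and $K\in L^1$ are assumed implicitly by the fact that $v$ solves the equation pointwise), so the integral makes sense.

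Next, since $v'$ is now known in closed form in terms of $v_+\!*K$ and the Cauchy data $v(x_0)$, $v'(x_0)$, one integrates again from $x_0$ to $x$:
\begin{equation*}
v(x)-v(x_0)=\int_{x_0}^{x} v'(\xi)\,d\xi,
\end{equation*}
which directly produces the first line of \eqref{FIXP-NEC}. To obtain the simplified second line, I would compute the two contributions separately: the term involving $v'(x_0)$ gives
\begin{equation*}
e^{-\omega x_0}v'(x_0)\int_{x_0}^{x}e^{\omega\xi}\,d\xi = \frac{(e^{\omega(x-x_0)}-1)\,v'(x_0)}{\omega},
\end{equation*}
and the double integral is rewritten by Fubini's theorem, swapping the order of integration over the triangle $\{x_0\le\theta\le\xi\le x\}$:
\begin{equation*}
\int_{x_0}^{x}\!e^{\omega\xi}\!\int_{x_0}^{\xi}\!e^{-\omega\theta}v_+\!*K(\theta)\,d\theta\,d\xi = \int_{x_0}^{x}e^{-\omega\theta}v_+\!*K(\theta)\!\int_{\theta}^{x}\!e^{\omega\xi}\,d\xi\,d\theta = \frac{1}{\omega}\int_{x_0}^{x}(e^{\omega(x-\theta)}-1)\,v_+\!*K(\theta)\,d\theta.
\end{equation*}
Combining the two pieces delivers the second equality in \eqref{FIXP-NEC}.

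There is no real obstacle here: everything is routine once one recognizes \eqref{ChCSKJ.oqdjf34rg0hn.35} as a linear first-order ODE in $v'$ with forcing $-v_+\!*K$. The only minor point to check is the applicability of Fubini, which is justified by the continuity of the integrand on the compact triangle of integration, and the fact that no structural assumption on $K$ beyond integrability is needed for the derivation.
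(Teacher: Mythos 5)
Your proposal is correct and is essentially the paper's own argument: the paper defines $W(x):=e^{-\omega x}v'(x)-e^{-\omega x_0}v'(x_0)+\int_{x_0}^x e^{-\omega\theta}v_+*K(\theta)\,d\theta$ and checks $W(x_0)=0$, $W'\equiv0$, which is exactly your integrating-factor computation, and then likewise integrates once more and swaps the order of integration to get the closed form. No substantive difference.
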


\begin{proof} Given~$x_0\in\R$, if
$$ W(x):=
e^{-\omega x} v'(x)-e^{-\omega x_0}v'(x_0)+\int_{x_0}^x e^{-\omega\theta} v_+*K(\theta)\,d\theta,$$
we have that~$W(x_0)=0$ and
\begin{eqnarray*}
W'(x)=e^{-\omega x} v''(x)
-\omega e^{-\omega x} v'(x)+e^{-\omega x} v_+*K(x)=0.
\end{eqnarray*}
This yields that~$W$ vanishes identically and consequently we obtain~\eqref{SJO0-2urjthimh6587bnOSJHND-02cdwsw2rvb3e5fe4536}.

Hence, after an additional integration in~\eqref{SJO0-2urjthimh6587bnOSJHND-02cdwsw2rvb3e5fe4536}
and a change of order of the integrals,
\begin{equation*}\begin{split}
v(x)&=v(x_0)+\int_{x_0}^x e^{\omega \xi}\left( e^{-\omega x_0}v'(x_0)-\int_{x_0}^\xi e^{-\omega\theta} v_+*K(\theta)\,d\theta\right)\,d\xi\\&=v(x_0)+\frac{(e^{\omega x}-e^{\omega x_0})\,e^{-\omega x_0}v'(x_0)}\omega
-\int_{x_0}^x \left( \int_{\theta}^x e^{\omega(\xi-\theta)} v_+*K(\theta)\,d\xi\right)\,d\theta\\&=v(x_0)+\frac{(e^{\omega (x-x_0)}-1)\,v'(x_0)}\omega
-\frac1\omega\int_{x_0}^x (e^{\omega(x-\theta)}-1) \,v_+*K(\theta)\,d\theta
,\end{split}
\end{equation*}as desired.
\end{proof}

\section{Proof of Theorem~\ref{TRAVE}}\label{DS5}
First of all, up to replacing~$\omega$ with~$\frac\omega{c}$ and~$K$ by~$\frac{K}c$, we can suppose that~$c=1$. 
Consequently, bearing in mind Corollary~\ref{coISPK},
to establish Theorem~\ref{TRAVE}, it suffices to find
\begin{equation}\label{12rb3eg3rged2ac342i}\begin{split}&
{\mbox{$v\in C([-2R,+\infty))$
such that~\eqref{FIXP} is satisfied for all~$x\in[-2R,+\infty)$.}}\end{split}\end{equation}

To this end, we define the functional space
\begin{equation}\label{ojfg043yijhkCOkoqLFWGERT:X} X:=\Big\{ {\mbox{$(v,w)$ with~$v$, $w\in C([-2R,+\infty))$}}
\Big\}.\end{equation}
We pick~$M>0$, to be taken conveniently large in what follows, and we endow~$X$ with the norm
$$ \| (v,w)\|:=\sup_{x\in[-2R,+\infty)}\frac{|v(x)|}{ e^{Mx}}+
\sup_{x\in[-2R,+\infty)}\frac{|w(x)|}{e^{Mx}}.$$
We observe that
\begin{equation}\label{ojfg043yijhkCOkoqLFWGERT}
{\mbox{the space~$X$ is complete.}}
\end{equation}
Indeed, if a sequence~$(v_k,w_k)$
is Cauchy in this norm, then the sequences~$v_k$ and~$w_k$ are Cauchy in~$L^\infty([-2R,\ell])$, for all~$\ell>0$,
and therefore they converge to some~$v$ and~$w$, respectively,
uniformly in~$[-2R,\ell]$ for all~$\ell>0$,
thus~$v$, $w\in C([-2R,+\infty))$. 

As a result, given~$\e>0$,
there exists~$k_\e$ such that, for all~$j$, $k\ge k_\e$,
$$ \sup_{x\in[-2R,+\infty)}\frac{|v_j(x)-v_k(x)|}{ e^{Mx}}+
\sup_{x\in[-2R,+\infty)}\frac{|w_j(x)-w_k(x)|}{e^{Mx}}\le\e$$
and consequently, for all~$x\in[-2R,+\infty)$,
\begin{eqnarray*}
&&\frac{|v(x)-v_k(x)|}{ e^{Mx}}+\frac{|w(x)-w_k(x)|}{e^{Mx}}
=\lim_{j\to+\infty}
\frac{|v_j(x)-v_k(x)|}{ e^{Mx}}+\frac{|w_j(x)-w_k(x)|}{e^{Mx}}
\le\e.
\end{eqnarray*}
This gives that, for all~$k\ge k_\e$,
we have that~$\|(v,w)-(v_k,w_k)\|\le2\e$
and the proof of~\eqref{ojfg043yijhkCOkoqLFWGERT} is complete.

Now, for all~$x\in[-2R,+\infty)$, we define
\begin{equation}\label{LASEWRGTRYqwfg0435y-3i65k}\begin{split}&
\Phi_1(v,w;x):=\int_0^x w(\xi)\,d\xi\\{\mbox{and}}\quad&
\Phi_2(v,w;x):=e^{\omega x}\left(\kappa-\int_0^x e^{-\omega\theta} v_+*K(\theta)\,d\theta\right).\end{split}
\end{equation}
We also use the short notation~$\Phi(v,w;x):=\big(\Phi_1(v,w;x),\Phi_2(v,w;x)\big)$
and denote by~${\mathcal{B}}_\rho$ the (say, closed)
ball of radius~$\rho>0$ in~$X$.

We claim that, for suitable choices of~$M$ and~$\rho$,
\begin{equation}\label{9ojfg430uyjhm:0itkjymy}
{\mbox{$\Phi:{\mathcal{B}}_\rho\to{\mathcal{B}}_\rho$ is a contraction.}}
\end{equation}
Let us postpone the proof of this claim and first show that this would lead
to the desired result in Theorem~\ref{TRAVE}.
Indeed, if~\eqref{9ojfg430uyjhm:0itkjymy} holds true, one deduces from~\eqref{ojfg043yijhkCOkoqLFWGERT}
and the Contraction Mapping Theorem that there exists a solution~$(v,w)\in{\mathcal{B}}_\rho$ of the fixed point problem~$
(v(x),w(x))=\Phi(v,w;x)$.

In particular, we have that~$v$, $w\in C([-2R,+\infty))$ and that, for all~$x\in[-2R,+\infty)$,
$$\begin{dcases}
\displaystyle v(x)=\int_0^x w(\xi)\,d\xi\\ \displaystyle 
w(x)=e^{\omega x}\left(\kappa-\int_0^x e^{-\omega\theta} v_+*K(\theta)\,d\theta\right).
\end{dcases}$$
This would lead to~\eqref{12rb3eg3rged2ac342i} and thus complete the proof of Theorem~\ref{TRAVE}.

It remains to prove~\eqref{9ojfg430uyjhm:0itkjymy}. For this objective, we use the notation
\begin{equation}\label{10.11bis}
\|f\|_\star:=\sup_{x\in[-2R,+\infty)}\frac{|f(x)|}{ e^{Mx}},\quad
x_+:=\max\{x,0\},\quad{\mbox{ and }}\quad
x_-:=\max\{-x,0\}.\end{equation}
We remark that
\begin{equation*}
\begin{split}&
\sup_{x\in[-2R,+\infty)}\frac{|\Phi_1(v,w;x)|}{ e^{Mx}}=
\sup_{x\in[-2R,+\infty)}\frac{1}{ e^{Mx}}\left|\int_0^x w(\xi)\,d\xi\right|\\&\qquad=
\sup_{x\in[-2R,+\infty)}\frac{1}{ e^{Mx}}\left|\int^{x_+}_{-x_-} w(\xi)\,d\xi\right|\\&\qquad\le
\sup_{x\in[-2R,+\infty)}\frac{1}{ e^{Mx}}\left(\int^{x_+}_{-x_-} |w(\xi)|\,d\xi\right)\\
&\qquad\le
\sup_{x\in[-2R,+\infty)}\frac{\|w\|_\star}{ e^{Mx}}\left(\int^{x_+}_{-x_-}e^{M\xi}\,d\xi\right)
\\&\qquad=
\sup_{x\in[-2R,+\infty)}\frac{\|w\|_\star
( e^{Mx_+}-e^{-Mx_-} )
}{ M e^{Mx}}.
\end{split}
\end{equation*}
Since
\begin{eqnarray*} e^{Mx_+}-e^{-Mx_-}&=&\begin{dcases}
e^{Mx}-1 & {\mbox{ if $x\in[0,+\infty)$}},\\
1-e^{Mx}& {\mbox{ if $x\in[-2R,0)$}},
\end{dcases}\\&\le&e^{M(x+2R)},\end{eqnarray*}
we conclude that
\begin{equation}\label{ojqsld329rtgir}
\begin{split}&
\sup_{x\in[-2R,+\infty)}\frac{|\Phi_1(v,w;x)|}{ e^{Mx}}\le \frac{e^{2MR}\|w\|_\star}{ M }.
\end{split}
\end{equation}

Also, by construction, for all~$(v,w)$, $(\tilde v,\tilde w)\in X$,
$$\Phi_1(v,w;x)-\Phi_1(\tilde v,\tilde w;x)=\int_0^x \big(w(\xi)-\tilde w(\xi)\big)\,d\xi
=\Phi_1(v-\tilde v,w-\tilde w;x)$$
and thus we infer from~\eqref{ojqsld329rtgir} that
\begin{equation}\label{ojqsld329rtgir:2}
\begin{split}
\sup_{x\in[-2R,+\infty)}\frac{|\Phi_1(v,w;x)-\Phi_1(\tilde v,\tilde w;x)|}{ e^{Mx}}&
=\sup_{x\in[-2R,+\infty)}\frac{|\Phi_1(v-\tilde v,w-\tilde w;x)|}{ e^{Mx}}\\&
\le \frac{e^{2MR}\|w-\tilde w\|_\star}{ M }.
\end{split}
\end{equation}

Furthermore, in virtue of~\eqref{LAMB}, for all~$\theta\in[-x_-,x_+]$,
\begin{eqnarray*}&&
|f*K(\theta)|=\left|\int_{-R}^R K(y)\,f(\theta-y)\,dy\right|\le\Lambda
\int_{-R}^{R}|f(\theta-y)|\,dy\\&&\qquad\le\Lambda
\|f\|_\star \int_{-R}^{R}e^{M(\theta-y)}\,dy=\frac{\Lambda\|f\|_\star(e^{M(\theta+R)}-e^{M(\theta-R)})}M.
\end{eqnarray*}
As a result, if~$M>\omega$,
\begin{eqnarray*}&&
\left|\int_0^x e^{-\omega\theta} f*K(\theta)\,d\theta\right|=\left|\int_{-x_-}^{x_+} e^{-\omega\theta} f*K(\theta)\,d\theta\right|\\
&&\qquad\le\int_{-x_-}^{x_+} e^{-\omega\theta} |f*K(\theta)|\,d\theta
\le\frac{ \Lambda (e^{MR}-e^{-MR})\|f\|_\star}M
\int_{-x_-}^{x_+} e^{(M-\omega)\theta}\,d\theta\\&&\qquad=
\frac{\Lambda (e^{MR}-e^{-MR})\|f\|_\star(e^{(M-\omega)x_+}-e^{-(M-\omega)x_-})}{M(M-\omega)}\\&&\qquad
\le \frac{ \Lambda e^{MR+(M-\omega)x_+}\|f\|_\star}{M(M-\omega)}.
\end{eqnarray*}

This yields that
\begin{equation}\label{02jrtmgy.10imy45a2X5t66}
\begin{split}&
\sup_{x\in[-2R,+\infty)}\frac{|\Phi_2(v,w;x)-\Phi_2(\tilde v,\tilde w;x)|}{e^{Mx}}\\&\qquad=
\sup_{x\in[-2R,+\infty)}
e^{(\omega-M) x}\left|\int_0^x e^{-\omega\theta} (v_+-\tilde v_+)*K(\theta)\,d\theta\right|\\&\qquad
\le
\frac{\Lambda e^{3MR}\|v_+-\tilde v_+\|_\star}{M(M-\omega)}=
\frac{\Lambda e^{3MR}}{M(M-\omega)}\sup_{x\in[-2R,+\infty)}\frac{|v_+(x)-\tilde v_+(x)|}{e^{Mx}}\\&\qquad\le
\frac{\Lambda e^{3MR}}{M(M-\omega)}\sup_{x\in[-2R,+\infty)}\frac{|v(x)-\tilde v(x)|}{ e^{Mx}}=\frac{\Lambda e^{3MR}\|v-\tilde v\|_\star}{M(M-\omega)}.
\end{split}
\end{equation}

It follows from this and~\eqref{ojqsld329rtgir:2} that
\begin{eqnarray*}
\| \Phi(v,w;x)-\Phi(\tilde v,\tilde w;x)\|\le
\frac{e^{2MR}\|w-\tilde w\|_\star}{ M }+
\frac{\Lambda e^{3MR}\|v-\tilde v\|_\star}{M(M-\omega)}.
\end{eqnarray*}
In particular, if~$M:=4+\omega+\Lambda$,
\begin{eqnarray*}
\| \Phi(v,w;x)-\Phi(\tilde v,\tilde w;x)\|\le
\frac{e^{3(4+\omega+\Lambda)R}\,\|(v,w)-(\tilde v,\tilde w)\|}{ 4}.
\end{eqnarray*}
Hence, if~$R$ is sufficiently small with respect to~$\omega$ and~$\Lambda$,
\begin{equation}\label{912urjthgn5KJMsd-ipef0218ritgr}
\| \Phi(v,w;x)-\Phi(\tilde v,\tilde w;x)\|\le
\frac{\|(v,w)-(\tilde v,\tilde w)\|}{ 2}.
\end{equation}

For this reason, to complete the proof of~\eqref{9ojfg430uyjhm:0itkjymy}, it remains to pick~$\rho>0$ such that
\begin{equation}\label{9ojfg430uyjhm:0itkjymy:09}
\Phi({\mathcal{B}}_\rho)\subseteq{\mathcal{B}}_\rho.\end{equation} To fulfill this goal, we use~\eqref{912urjthgn5KJMsd-ipef0218ritgr}
with~$(\tilde v,\tilde w):=(0,0)$, finding that, for all~$(v,w)\in{\mathcal{B}}_\rho$,
\begin{eqnarray*}\| \Phi(v,w;x)\|&\le&
\| \Phi(v,w;x)-\Phi(0,0;x)\|+
\| \Phi(0,0;x)\|\\&\le&
\frac{\|(v,w)\|}{ 2}+\|\kappa e^{\omega x}\|_\star\\&\le&\frac\rho2+\kappa\sup_{x\in[-2R,+\infty)} e^{-4x}\\&=&
\frac\rho2+\kappa e^{8R}.
\end{eqnarray*}
Hence, we choose~$\rho:=2\kappa e^{8R}$, whence~\eqref{9ojfg430uyjhm:0itkjymy:09} follows, as desired.~\hfill \qedsymbol

\section{Proof of Theorem~\ref{TRAVE.BIS}}\label{DS5.bBSIS}
The proof is a variation of that of Theorem~\ref{TRAVE}. We provide full details for the convenience of the reader.
The gist here is to endow the functional space~$X$ in~\eqref{ojfg043yijhkCOkoqLFWGERT:X}
with the norm
\begin{equation}\label{0q2erwjg-2rfgb093ihjr-23prohb329} \vertiii{ (v,w)}:=\sup_{x\in[-2R,+\infty)}\frac{|v(x)|}{ e^{Mx}}+
L\sup_{x\in[-2R,+\infty)}\frac{|w(x)|}{e^{Mx}},\end{equation}
with~$L>0$ and~$M>\omega>0$. We will pick~$L$ and~$M$ conveniently in what follows,
in dependence of the given~$R$ and~$\omega$.

The (say, closed)
ball of radius~$\rho>0$ in~$X$ with respect to this norm is denoted by~${\mathscr{B}}_\rho$ and, for all~$x\in\R$,
we consider~$\Phi(v,w;x):=\big(\Phi_1(v,w;x),\Phi_2(v,w;x)\big)$, with~$\Phi_1$ and~$\Phi_2$ as in~\eqref{LASEWRGTRYqwfg0435y-3i65k}.

As in Section~\ref{DS5}, our goal is to show that
\begin{equation}\label{9ojfg430uyjhm:0itkjymy:BIS}
{\mbox{$\Phi:{{\mathscr{B}}}_\rho\to{{\mathscr{B}}}_\rho$ is a contraction,}}
\end{equation}
since a fixed point of~$\Phi$ would automatically provide the desired traveling wave with~$v(0)=0$ and~$v'(0)=\kappa$.

To that effect, we deduce from~\eqref{10.11bis} and~\eqref{0q2erwjg-2rfgb093ihjr-23prohb329} that
$$\vertiii{ (v,w)}\ge\|v\|_\star\qquad {\mbox{and}}\qquad
\vertiii{ (v,w)}\ge L\|w\|_\star.$$
Thus, bearing in mind~\eqref{ojqsld329rtgir:2} and~\eqref{02jrtmgy.10imy45a2X5t66}, we see that
\begin{eqnarray*}&&
\vertiii{ (\Phi(v,w;x)-\Phi(\tilde v,\tilde w;x))}\\&&\quad=\sup_{x\in[-2R,+\infty)}\frac{|\Phi_1(v,w;x)-\Phi_1(\tilde v,\tilde w;x)|}{ e^{Mx}}+
L\sup_{x\in[-2R,+\infty)}\frac{|\Phi_2(v,w;x)-\Phi_2(\tilde v,\tilde w;x)|}{e^{Mx}}\\&&\quad\le
\frac{e^{2MR}\|w-\tilde w\|_\star}{ M }+
\frac{L\Lambda e^{3MR}\|v-\tilde v\|_\star}{M(M-\omega)}\\&&\quad\le
\frac{e^{3MR}}{ M }\left( \|w-\tilde w\|_\star+
\frac{L\Lambda \|v-\tilde v\|_\star}{M-\omega}\right)\\&&\quad\le
\frac{e^{3MR}}{ M }\left( \frac1L+
\frac{L\Lambda }{M-\omega}\right)\vertiii{ (v-\tilde v,w-\tilde w)}.
\end{eqnarray*}
We now choose~$M:=\omega+1$ and then~$L:=\frac{4e^{3MR}}{M}$, concluding that
\begin{equation}\label{01e2wrofhgb 01326t3rteted1ashryttg3uri}\begin{split}
\vertiii{ (\Phi(v,w;x)-\Phi(\tilde v,\tilde w;x))}&\le
\left( \frac14+
\frac{4e^{6(\omega+1)R}\Lambda }{(\omega+1)^2}\right)\vertiii{ (v-\tilde v,w-\tilde w)}\\&\le
\frac{ \vertiii{ (v-\tilde v,w-\tilde w)}}2,
\end{split}
\end{equation}
as long as~$\Lambda$ is suitably small.

Additionally, by~\eqref{01e2wrofhgb 01326t3rteted1ashryttg3uri}, if~$(v,w)\in{{\mathscr{B}}}_\rho$
and~$L$ and~$M$ are as above,
\begin{eqnarray*}
\vertiii{ \Phi(v,w;x)}&\le&\vertiii{ \Phi(v,w;x)-\Phi(0,0;x)}+\vertiii{\Phi(0,0;x)}\\&\le&
\frac{ \vertiii{ (v,w)}}2+\kappa L\sup_{x\in[-2R,+\infty)}e^{-x}\\&\leq&
\frac{ \rho}2+\kappa L e^{2R}\\&=&\rho
\end{eqnarray*}
with~$\rho:=2\kappa L e^{2R}$.

Thanks to this and~\eqref{01e2wrofhgb 01326t3rteted1ashryttg3uri},
the proof of~\eqref{9ojfg430uyjhm:0itkjymy:BIS} is thereby complete.~\hfill \qedsymbol

\section{Proof of Theorem~\ref{ABBOU}}\label{ajopsclnFG}
As observed in Section~\ref{DS5}, without loss of generality we can suppose that~$c=1$
and, by Lemma~\ref{lSJD}, we know that~$v$ is a solution of~\eqref{ChCSKJ.oqdjf34rg0hn.35}.

Moreover, without loss of generality, we can assume that
\begin{equation}\label{WELWVDFB}
{\mbox{neither~$K$ nor~$v_+$ vanish identically,}}
\end{equation}
otherwise~\eqref{FIXP-NEC} would boil down to~$v(x)=v(x_0)+\frac{(e^{\omega (x-x_0)}-1)\,v'(x_0)}\omega$, which is an unbounded function of~$x$ unless it is constant, yielding
the desired result in Theorem~\ref{ABBOU}.

As a byproduct of~\eqref{WELWVDFB}, we can find~$\varrho>0$ such that
\begin{equation}\label{ojsqdskncd932V0}
c_\varrho:=\int_{-\varrho}^{\varrho}K(y)\,dy>0.
\end{equation}

Now, suppose that~$u$ is bounded (and therefore~$v$ is bounded as well). We distinguish two cases: either, for all~$x_0\in\R$,
\begin{equation}\label{ojsqdskncd932V}e^{-\omega x_0}
v'(x_0)=\int_{x_0}^{+\infty} e^{-\omega\theta} \,v_+*K(\theta)\,d\theta
\end{equation}
or there exists~$x_0\in\R$ for which~\eqref{ojsqdskncd932V} is violated.

Let us deal first with the case in which~\eqref{ojsqdskncd932V} holds true (we will actually
show that this leads to a contradiction, hence this case can be ruled out).
In this case, we have that~$v'\ge0$ and therefore, if~$v$ is bounded, it possesses two horizontal asymptotes at~$\pm\infty$. In fact, by~\eqref{WELWVDFB}, this gives that
\begin{equation*}
\ell:=\lim_{x\to+\infty}v(x)>0.
\end{equation*}
Thus, we find~$\overline{x}$ such that for all~$x\ge\overline{x}$ we have that~$v(x)\ge\frac\ell2$.

Using this, \eqref{ojsqdskncd932V0}, and~\eqref{ojsqdskncd932V}, we conclude that, for all~$x\ge\overline{x}+\varrho$,
\begin{eqnarray*}v'(x)&=&\int_{x}^{+\infty} e^{\omega(x-\theta)} \,v_+*K(\theta)\,d\theta
\\&\ge&\int_{x}^{+\infty} \left(\int_{-\varrho}^{\varrho} e^{\omega(x-\theta)} v_+(\theta-y)\,K(y)\,dy\right)\,d\theta\\&\ge&\frac\ell2\int_{x}^{+\infty} \left(\int_{-\varrho}^{\varrho} e^{\omega(x-\theta)} K(y)\,dy\right)\,d\theta\\&\ge&\frac{c_\varrho\,\ell}2\int_{x}^{+\infty} e^{\omega(x-\theta)}\,d\theta\\&=&\frac{c_\varrho\,\ell}{2\omega}.
\end{eqnarray*}
But then
\begin{eqnarray*}&&\lim_{x\to+\infty}v(x)= v(\overline{x}+\varrho)+\lim_{x\to+\infty}\int_{\overline{x}+\varrho}^x v'(\xi)\,d\xi\\&&\qquad
\ge  v(\overline{x}+\varrho)+\lim_{x\to+\infty}\frac{c_\varrho\,\ell\,(x-\overline{x}-\varrho)}{2\omega}=+\infty,\end{eqnarray*}
in contradiction with the assumption that~$v$ is bounded.

Let us now consider the case in which there exists~$x_0\in\R$ such that~\eqref{ojsqdskncd932V} is violated, namely
\begin{equation}\label{0i2orjfegrb9jSDM}e^{-\omega x_0}
v'(x_0)-\int_{x_0}^{+\infty} e^{-\omega\theta} \,v_+*K(\theta)\,d\theta\ne0.
\end{equation}
Then, the boundedness of~$v$ and~\eqref{FIXP-NEC} give that
\begin{eqnarray*}
0&=&\lim_{x\to+\infty}\frac{v(x)-v(x_0)}{e^{\omega x}}\\&=&\lim_{x\to+\infty}
\left[ \frac{(e^{-\omega x_0}-e^{-\omega x})\,v'(x_0)}\omega
-\frac1\omega\int_{x_0}^x (e^{-\omega\theta}-e^{-\omega x}) \,v_+*K(\theta)\,d\theta\right]\\&=&
\frac1\omega
\left[ e^{-\omega x_0}v'(x_0)
-\lim_{x\to+\infty}\int_{x_0}^x (e^{-\omega\theta}-e^{-\omega x}) \,v_+*K(\theta)\,d\theta\right].
\end{eqnarray*}
Hence, by the Dominated Convergence Theorem,
$$ 0=\frac1\omega
\left[ e^{-\omega x_0}v'(x_0)
-\int_{x_0}^{+\infty} e^{-\omega\theta} \,v_+*K(\theta)\,d\theta\right],
$$but this is in contradiction with~\eqref{0i2orjfegrb9jSDM} and the proof of Theorem~\ref{ABBOU} is thereby complete.~\hfill \qedsymbol

\section{Proof of Theorem~\ref{NONMSPT43}}\label{N56789-97ONMSPT43:OSIDNKF3495TI:2345T0}
First of all, we point out that, by~\eqref{ChCSKJ.oqdjf34rg0hn.35},
\begin{equation*}
-\frac{d}{dx} \left( e^{-\omega x}v'(x)\right)=
e^{-\omega x}\big(
\omega v'(x)-v''(x)\big)=e^{-\omega x}\, v_+*K(x).
\end{equation*}
In particular, 
\begin{equation}\label{0jghmlytni8yX989}
\frac{d}{dx} \left( e^{-\omega x}v'(x)\right)\le0.\end{equation}
Therefore, for all~$x\le0$, integrating~\eqref{0jghmlytni8yX989} over the segment~$[x,0]$ we find that
\begin{equation*}
\kappa-e^{-\omega x}v'(x)\le0.
\end{equation*}
Similarly, for all~$x\ge0$, integrating~\eqref{0jghmlytni8yX989} over the segment~$[0,x]$ we find that
$$e^{-\omega x}v'(x)-\kappa\le0$$
and these observations establish~\eqref{80iujgo486nb7m2985vb65BSAnmgyhn9}.

Let us now assume~\eqref{LAMBdasotto} and suppose, for the sake of contradiction, that~\eqref{LAMBdasotto.bi}
holds true. Then, for all~$x\ge0$,
$$ v(x)\ge\frac{\kappa_\star}{\omega}(e^{\omega x}-1)$$
and therefore, for all~$x\ge\varrho$,
\begin{eqnarray*}&& v_+*K(x)\ge\lambda\int_{-\varrho}^{\varrho} v_+(x-y)\,dy
\ge\frac{\lambda\kappa_\star}{\omega}\int_{-\varrho}^{\varrho} (e^{\omega (x-y)}-1)\,dy\\&&\quad=
\frac{\lambda\kappa_\star}{\omega}\left( 
\frac{ (e^{\omega\varrho}-e^{-\omega\varrho})\,
e^{\omega x}}\omega
-2\varrho\right).\end{eqnarray*}

Combining this and~\eqref{SJO0-2urjthimh6587bnOSJHND-02cdwsw2rvb3e5fe4536}, we gather that,
when~$x\ge\varrho$,
\begin{eqnarray*}
e^{-\omega x}v'(x)&=&\kappa-\int_{0}^x e^{-\omega\theta} v_+*K(\theta)\,d\theta \\&\le&
\kappa-\frac{\lambda\kappa_\star}{\omega}
\int_{0}^x \left( 
\frac{e^{\omega\varrho}-e^{-\omega\varrho}}\omega
-2\varrho e^{-\omega\theta} \right)
\,d\theta\\&=&
\kappa-\frac{\lambda\kappa_\star}{\omega^2}\Big( 
(e^{\omega\varrho}-e^{-\omega\varrho})x
-2\varrho  (1-e^{-\omega x})\Big).
\end{eqnarray*}
Hence, since
$$ \lim_{x\to+\infty}(e^{\omega\varrho}-e^{-\omega\varrho})x
-2\varrho  (1-e^{-\omega x})=+\infty,$$
we infer that, if~$x$ is sufficiently large, then~$e^{-\omega x}v'(x)<0$,
but this is in contradiction with~\eqref{LAMBdasotto.bi}.~\hfill \qedsymbol

\section{Proof of Theorem~\ref{ABBOU:side}}\label{oqwrufjgr0tphknwfbn}
Without loss of generality, we suppose that~$c=1$.
As a byproduct of~\eqref{80iujgo486nb7m2985vb65BSAnmgyhn9},
we have that, for every~$x\in(-\infty,0]$,
\begin{equation*}
v(x)=v(x)-v(0)=-\int_{x}^0 v'(\tau)\,d\tau\le-\kappa\int_{x}^0 e^{\omega\tau}\,d\tau=-\frac{\kappa}{\omega}(1-e^{\omega x})\le0.
\end{equation*}
Hence, for all~$x\in(-\infty,-R)$,
$$ v_+*K(x)=\int_{-R}^R v_+(x-y)\,K(y)\,dy=0$$
and therefore, by means of~\eqref{FIXP-NEC}, for all~$x\in(-\infty,-R)$,
\begin{equation*}
v(x)=v(-R)+\frac{(e^{\omega (x+R)}-1)\,v'(-R)}\omega
,\end{equation*}
from which~\eqref{SIDE-la1} plainly follows.

This and Theorem~\ref{ABBOU} yield~\eqref{SIDE-la2}.~\hfill \qedsymbol

\section{Proof of Theorem~\ref{NONMSPT43.rm}}\label{2efm02emiaksMEQWfvqw01367vf2-323r} 
We argue by contradiction and suppose that~$v_\omega$
is nondecreasing for all~$\omega$ arbitrarily small (and below we will
implicitly suppose that~$\omega\le1$). In particular, $v_\omega(x)\ge
v_\omega(0)=0$ for all~$x\in[0,+\infty)$ and~$v_\omega(x)\le
v_\omega(0)=0$ for all~$x\in(-\infty,0]$.

Also, it follows from~\eqref{80iujgo486nb7m2985vb65BSAnmgyhn9} that, for all~$x\in[0,+\infty)$,
\begin{eqnarray*}&&v_\omega(x)=\int_0^x v_\omega'(\theta)\,d\theta\le\kappa\int_0^x e^{\omega \theta}\,d\theta=\frac{\kappa}\omega
(e^{\omega x}-1)=\kappa\sum_{j=1}^{+\infty} \frac{\omega^{j-1}x^j}{j!}\\&&\quad\qquad\qquad
=\kappa x\sum_{i=0}^{+\infty} \frac{\omega^{i}x^i}{(i+1)!}\le
\kappa x\sum_{i=0}^{+\infty}\frac{x^i}{i!}=\kappa x e^{ x}
.\end{eqnarray*}

Consequently, $v_\omega$ is bounded in all compact subsets of~$[0,+\infty)$, uniformly in~$\omega$:
namely, for all~$\ell>0$,
$$ \sup_{{x\in[0,\ell]}\atop{\omega\in(0,1]}} |v_\omega(x)|\le \kappa \ell e^{ \ell}.$$

As a result, by~\eqref{LAMB} and~\eqref{SJO0-2urjthimh6587bnOSJHND-02cdwsw2rvb3e5fe4536}, for all~$x\in[0,\ell]$,
\begin{eqnarray*}
|v_\omega'(x)|&=&e^{\omega x}\left| \kappa-\int_{0}^x \left( \int_\R e^{-\omega\theta} v_{\omega,+}(\theta-y)\,K(y)\,dy\right)\,d\theta\right|\\&\le&e^{\omega \ell}\left( \kappa+
\Lambda\int_{0}^\ell \left( \int_{-R}^R e^{-\omega\theta}\kappa(\ell+R)e^{\ell+R}\,dy\right)\,d\theta\right)\\&\le&e^{ \ell}\Big( \kappa+
2\kappa\Lambda R\ell (\ell+R)e^{\ell+R}\Big)\\&=:&C_\ell,
\end{eqnarray*}
showing that also~$v_\omega'$ is bounded in all compact subsets of~$[0,+\infty)$, uniformly in~$\omega$.

Moreover, by~\eqref{ChCSKJ.oqdjf34rg0hn.35}, if~$x\in[0,\ell]$,
\begin{eqnarray*}
|v''_\omega(x)|&\le&| v'_\omega(x)|+|v_{\omega,+}*K(x)|\\&\le&
C_\ell+\Lambda\int_{-R}^R |v_{\omega,+}(x-y)|\,dy\\&\le&
C_\ell+2\Lambda R\sup_{[0,\ell+R] }|v_{\omega}|\\&\le&
C_\ell+2 \kappa\Lambda R (\ell+R) e^{ \ell+R}.
\end{eqnarray*}
This and~\eqref{ChCSKJ.oqdjf34rg0hn.35} yield that~$v_\omega''$ is bounded in all compact subsets of~$[0,+\infty)$, uniformly in~$\omega$.

Hence, we can extract a (not relabeled) sequence such that~$v_\omega$ and its derivative converge uniformly
in all sets of the form~$[0,\ell]$. By construction, denoting~$v_0:[0,+\infty)\to\R$ this limit function,
we have that~$v_0(0)=0$, $v_0'(0)=\kappa>0$ and~$v_0$ is nondecreasing.

Hence, bearing in mind~\eqref{LAMBdasotto} and~\eqref{FIXP-NEC},
\begin{eqnarray*}
v_0(x)&=&
\lim_{\omega\searrow0}v_\omega(x)\\&=&\lim_{\omega\searrow0}\left(\frac{(e^{\omega x}-1)\,\kappa}\omega
-\frac1\omega\int_{0}^x (e^{\omega(x-\theta)}-1) \,v_{\omega,+}*K(\theta)\,d\theta\right)\\&=&
\kappa x
-\int_{0}^x (x-\theta)\,v_{0,+}*K(\theta)\,d\theta\\&\le&
\kappa x
-\lambda\int_{0}^x\left(\int_{-\varrho}^\varrho (x-\theta)\,v_{0,+}(\theta-y)\,dy\right)\,d\theta.
\end{eqnarray*}

Accordingly, using the monotonicity of~$v_0$, we see that if~$\theta\ge2\varrho$ and~$y\le\varrho$ then~$v_0(\theta-y)\ge v_0(\varrho)>0$
and thus
\begin{eqnarray*}
0&<& \lim_{x\to+\infty}v_0(x)\\&\le&
\lim_{x\to+\infty}\left[
\kappa x
-\lambda\int_{2\varrho}^x\left(\int_{-\varrho}^\varrho (x-\theta)\,v_{0,+}(\theta-y)\,dy\right)\,d\theta\right]\\& \le&
\lim_{x\to+\infty}\left[
\kappa x
-\lambda v_0(\varrho)\int_{2\varrho}^x\left(\int_{-\varrho}^\varrho (x-\theta)\,dy\right)\,d\theta\right]\\& =&
\lim_{x\to+\infty}\left[
\kappa x
-\lambda v_0(\varrho)\varrho( x-2\varrho)^2\right]\\&=&-\infty,
\end{eqnarray*} which is a contradiction.~\hfill \qedsymbol

\section{Proof of Theorem~\ref{qdwsfvbolgrb:SDVb}}\label{qdwsfvbolgrb:SDVb:SEC}
Let us suppose, without loss of generality, that~$c=1$.
We deduce from~\eqref{80iujgo486nb7m2985vb65BSAnmgyhn9} that
\begin{equation*}
{\mbox{and $v'(x)>0$ for all~$x\in(-\infty,0]$}}\end{equation*}
and therefore
\begin{equation}\label{10wdfojv0pkfvwpiknfvd}
{\mbox{$v(x)\le0$ for all~$x\in(-\infty,0]$.}}\end{equation}

Also, by means of~\eqref{80iujgo486nb7m2985vb65BSAnmgyhn9}, for all~$x\in[0,+\infty)$,
$$ v(x)=v(x)-v(0)=\int_0^x v'(y)\,dy\le
\kappa \int_0^x e^{\omega y}\,dy=\frac{\kappa(e^{\omega x}-1)}\omega.$$
Consequently, for all~$x\in[0,+\infty)$,
$$ v_+(x)\le\frac{\kappa(e^{\omega x}-1)}\omega.$$
Hence, we use~\eqref{LAMB} and~\eqref{10wdfojv0pkfvwpiknfvd}, finding that, for all~$x\in[0,+\infty)$,
\begin{equation*}\begin{split}
v_+*K(x)&\le\Lambda\int_{-R}^R v_+(x-y)\,dy\\&=\Lambda\int_{-R}^{\min\{R,x\}} v_+(x-y)\,dy\\&\le
\frac{\Lambda\kappa}\omega
\int_{-R}^{\min\{R,x\}} 
(e^{\omega (x-y)}-1)\,dy\\&=
\frac{\Lambda\kappa}{\omega^2}(e^{\omega(x+R)}-e^{\omega\max\{x-R,0\}})
-\frac{\Lambda\kappa}\omega(\min\{R,x\}+R)\\&\le\frac{\Lambda\kappa}{\omega^2}(e^{\omega(x+R)}-e^{\omega\max\{x-R,0\}}).\end{split}
\end{equation*}

As a result, by Lemma~\ref{lSJD}
and equation~\eqref{SJO0-2urjthimh6587bnOSJHND-02cdwsw2rvb3e5fe4536}, for all~$x\in[0,+\infty)$,
\begin{equation*}
\begin{split}
\kappa-e^{-\omega x}v'(x)&=\int_0^x e^{-\omega\theta} v_+*K(\theta)\,d\theta\\&\le
\frac{\Lambda\kappa}{\omega^2}\int_0^x
(e^{\omega(\theta+R)}-e^{\omega\max\{\theta-R,0\}})
\,d\theta
\end{split}\end{equation*}
and therefore
\begin{equation}\label{QdwfvEasxcvfdWTEGRHNevf} \frac{e^{-\omega x}v'(x)}\kappa\ge1- \Phi_{\omega,\Lambda,R}(x),\end{equation}
where
$$ \Phi_{\omega,\Lambda,R}(x):=\frac{\Lambda}{\omega^2}\int_0^x
(e^{\omega(\theta+R)}-e^{\omega\max\{\theta-R,0\}})
\,d\theta.$$
Notice that~$ \Phi_{\omega,\Lambda,R}$ is continuous in~$[0,+\infty)$, hence there exists~$L>0$,
depending only on~$\omega$, $\Lambda$, and~$R$, such that, for all~$x\in[0,L)$ we have that~$\Phi_{\omega,\Lambda,R}(x)<1$.

Recalling~\eqref{QdwfvEasxcvfdWTEGRHNevf} we thereby conclude that, for all~$x\in[0,L)$,
we have that~$\frac{e^{-\omega x}v'(x)}\kappa>0$, and thus~$v'(x)>0$, as desired.

In addition, when~$\omega=1$, 
\begin{eqnarray*} \Phi_{\omega,\Lambda,R}(x)&=&
\Lambda\int_0^x (e^{\theta+R}-e^{\max\{\theta-R,0\}})\,d\theta\\&=&\begin{dcases}\displaystyle\Lambda\left(
e^R(e^x-1)-x\right), & {\mbox{ if }}\displaystyle x \in[0,R], \\ \displaystyle\Lambda\left(
e^x(e^R-e^{-R})-e^R-R+1\right), & {\mbox{ if }}\displaystyle x \in(R,+\infty).
\end{dcases}\end{eqnarray*}

In particular, if~$x\in[0,R]$ and~$\Lambda\left(
e^R(e^R-1)-R\right)<1$, then
$$ \Phi_{\omega,\Lambda,R}(x)\le\Phi_{\omega,\Lambda,R}(R)=
\Lambda\left(
e^R(e^R-1)-R\right)<1.$$
Similarly, when~$L_\star:=\ln\left(\frac{1+\Lambda(e^R+R-1)}{\Lambda(e^R-e^{-R})}\right)$
and~$x\in(R,L_\star)$, we see that
$$ \Phi_{\omega,\Lambda,R}(x)<\Phi_{\omega,\Lambda,R}(L_\star)=1.$$
These observations give~\eqref{LESYT:Sdwoed}.~\hfill \qedsymbol

\section{Proof of Theorem~\ref{sojdcmvvb-rw-SeeDeqwdf}}\label{sojdcmvvb-rw-SeeDeqwdf:S}

Up to a vertical translation of~$v$, we can assume that~$\Theta=0$.
In this scenario, equation~\eqref{PASD:erf} reduces to
\begin{eqnarray*} \dot\lambda (t)\,{x}\,v'\left( \lambda(t)\,x\right)&
=& \partial_t u(x,t)\\&=&c\Delta u(x,t)+\int_{x-\ell}^{x+\ell} u_+(y,t)\,dy\\&
=&c\lambda^2(t)\,v''\left( \lambda(t)\,x\right)+
\int_{x-\ell}^{x+\ell} v_+\left( \lambda(t)\,y\right)\,dy\\&=&c\lambda^2(t)\,v''\left( \lambda(t)\,x\right)+\frac1{\lambda(t)}
\int_{\lambda(t)x-\lambda(t)\ell}^{\lambda(t)x+\lambda(t)\ell} v_+(\eta)\,d\eta,
\end{eqnarray*}as long as~$\lambda(t)\ne0$.

That is, for all~$r\in\R$ and~$t\in(0,T)$ for which~$\lambda(t)\ne0$,
\begin{equation}\label{CDWFrbul0jeme}
\dot\lambda (t)\,{r}\,v'(r)=
c\lambda^3(t)\,v''(r)+
\int_{r-\lambda(t)\ell}^{r+\lambda(t)\ell} v_+(\eta)\,d\eta.
\end{equation}

Now we claim that, for all~$r\in\R$,
\begin{equation}\label{WEQDUCUTe}
v(r)\le0.
\end{equation}
Indeed, we define
$$ \bar r:=\sup\big\{ \rho\in\R {\mbox{ s.t. }}v\le0{\mbox{ in }}(-\infty,\rho]\big\}$$
and note that~$\bar r\ge r_0$. Thus, if~\eqref{WEQDUCUTe} is violated, then~$\bar r<+\infty$
and we can find a sequence~$r_k>\bar r$, with~$v(r_k)>0$, such that~$r_k\to\bar r$ as~$k\to+\infty$.

We now distinguish two cases: either~$\lambda$ is constantly equal to zero, or~$\lambda$ is a non-constant function.

If~$\lambda$ is identically equal to zero, then~$u$ is constant, hence we can also consider~$v$ to be constant, say equal to some~$\bar v$. 
In this case, we obtain from~\eqref{PASD:erf} that
$$0=\int_{x-\ell}^{x+\ell}\bar v_+\,dy=2\ell\bar v_+,$$
giving~\eqref{WEQDUCUTe}, against the contradictory assumption.

Now, suppose that~$\lambda$ is non-constant. Then, there exists a compact (non-trivial) interval~$I$ such that~$\lambda(t)\ne0$ and~$\dot\lambda(t)\ne0$
for all~$t\in I$. Up to replacing~$v(r)$ with~$v(-r)$ and~$\lambda$ with~$-\lambda$,
we can suppose that~$\lambda(t)>0$ for all~$t\in I$.

Consequently, 
denoting by~$J=[J_{\min},J_{\max}]$ the image of~$I$ via the map~$\lambda$, we have that,
given any~$\mu\in J$, there exists a unique~$t_\mu\in I$ such that~$\mu=\lambda(t_\mu)$.
By construction, $J_{\min}>0$.

Now,
for all~$t\in I$ and~$r\in(-\infty,\bar r-\lambda(t)\ell]$, we deduce from~\eqref{CDWFrbul0jeme} that
$$ \dot\lambda (t)\,r\,v'(r)=
c\lambda^3(t)\,v''(r).$$
Therefore, by separating variables, there exists~$a\in\R$ such that
\begin{equation*}
{\mbox{$\dot\lambda(t)=-ac\lambda^3(t)$
for all~$t\in I$.}}\end{equation*}

Hence, for all~$r\in\R$ and~$t\in I$,
we can write~\eqref{CDWFrbul0jeme} in the form
\begin{equation*}
-ac\lambda^3 (t)\,r\,v'(r)=
c\lambda^3(t)\,v''(r)+
\int_{r-\lambda(t)\ell}^{r+\lambda(t)\ell} v_+(\eta)\,d\eta.
\end{equation*}
In particular, choosing~$t:=t_\mu$, we conclude that, for all~$\mu\in J$ and~$r\in\R$,
\begin{equation*}
-ac\mu^3\,r\,v'(r)=
c\mu^3\,v''(r)+
\int_{r-\mu\ell}^{r+\mu\ell} v_+(\eta)\,d\eta
\end{equation*}
and therefore
\begin{equation}\label{K8hdSP:UmjaJSMD3IK24}
-ac\,r\,v'(r)=
c\,v''(r)+\frac1{\mu^3}
\int_{r-\mu\ell}^{r+\mu\ell} v_+(\eta)\,d\eta.
\end{equation}
Differentiating in~$\mu$, we get that, for all~$\mu\in J$ and~$r\in\R$,
\begin{equation*}
0=-\frac3{\mu^4}
\int_{r-\mu\ell}^{r+\mu\ell} v_+(\eta)\,d\eta
+\frac\ell{\mu^3}\left( v_+(r+\mu\ell)+v_+(r-\mu\ell)\right).
\end{equation*}

We specialize this identity by choosing~$r:=r_k-\mu\ell$, gathering that, for all~$\mu\in J$,
\begin{equation}\label{SPEKLMFbwkC2O}
0=-\frac3{\mu^4}
\int_{r_k-2\mu\ell}^{r_k} v_+(\eta)\,d\eta
+\frac\ell{\mu^3}\left( v(r_k)+v_+(r_k-2\mu\ell)\right).
\end{equation}
We now point out that, for all~$\mu\in J$,
$$ \lim_{k\to+\infty}r_k-2\mu\ell=\bar r-2\mu\ell<\bar r,
$$
hence, for large~$k$, we have that~$r_k-2\mu\ell<\bar r$ and thus~$v_+(r_k-2\mu\ell)=0$.

This observation and~\eqref{SPEKLMFbwkC2O} entail that
$$0=-\frac3{\mu^4}
\int_{\bar r}^{r_k} v_+(\eta)\,d\eta
+\frac{\ell v(r_k)}{\mu^3}.$$
Thus, dividing by~$r_k-\bar r$ and taking the limit as~$k\to+\infty$,
$$0=-\frac{3 v_+(\bar r)}{\mu^4}
+\frac{\ell v'(\bar r)}{\mu^3}=\frac{\ell v'(\bar r)}{\mu^3},
$$
leading to
\begin{equation}\label{PKSM-vpor}
v'(\bar r)=0.\end{equation}

We now claim that
\begin{equation}\label{DEAjor2j043jym:asj0mr4t}
v''(\bar r)\ge0.
\end{equation}
To check this, we let~$\e_k:=r_k-\bar r$ and we use~\eqref{PKSM-vpor} to see that
$$ 0<v(r_k)=v(\bar r+\e_k)=v(\bar r+\e_k)-v(\bar r)-v'(\bar r)\e_k
=\int_0^{\e_k} \big(v'(\bar r+\tau)-v'(\bar r)\big)\,d\tau,
$$
hence~\eqref{DEAjor2j043jym:asj0mr4t} follows by dividing by~$\e_k^2$ and taking the limit as~$k\to+\infty$.

As a result, evaluating~\eqref{K8hdSP:UmjaJSMD3IK24} at~$r:=\bar r$ and using~\eqref{DEAjor2j043jym:asj0mr4t},
$$0=
c\,v''(\bar r)+\frac1{\mu^3}
\int_{\bar r-\mu\ell}^{\bar r+\mu\ell} v_+(\eta)\,d\eta\ge \int_{\bar r-\mu\ell}^{\bar r+\mu\ell} v_+(\eta)\,d\eta.$$
On this account, we have that~$v\le0$ in~$(-\infty,\bar r+\mu\ell]$, violating the definition of~$\bar r$.
The proof of~\eqref{WEQDUCUTe} is thereby complete.

It follows from~\eqref{WEQDUCUTe} that~\eqref{PASD:erf}
reduces to the heat equation and thus the solution is given by~\eqref{PLOTFORFI2}.~\hfill \qedsymbol

%
%
%

\section*{Conclusion}

This study clarifies the propagation mechanisms permitted by the bushfire model 
proposed in~\cite{PAPER1}
and identifies the parameter regimes that govern spreading versus extinction. By deriving explicit bounds on the environmental diffusion and ignition kernels, we identify precise thresholds separating global propagation from natural decay, thereby sharpening the qualitative picture of fire dynamics in this framework.

A central structural outcome of the analysis is the non-existence of vertically translating solutions. In contrast, traveling fronts emerge robustly: for ignition kernels of sufficiently limited range or intensity, waves with arbitrary prescribed speeds are always admissible. These fronts necessarily develop unbounded profiles, reflecting the intrinsic growth mechanisms built into the model.
A notable feature is that wave propagation is not externally driven (for instance, by wind forcing), but is instead generated endogenously by the temperature profile itself. In particular, propagation effects arise from steep temperature gradients, including those occurring in spatially remote regions through the ignition kernel. The motion of the front is therefore a self-organised phenomenon, determined by the internal thermal structure of the wave rather than by an imposed advective mechanism.

\begin{bibdiv}
\begin{biblist}

\bib{56:BYUHBCSP}{article}{
title={Bushfire propagation speed:
Combining the effects of wind and slope},
author={Berres, Stefan},
author={Carcamo, Noemi},
journal={24th International Congress on Modelling and Simulation, Sydney, NSW, Australia},
date={2021},
url={mssanz.org.au/modsim2021},
}

\bib{CHENEYGOULD95}{article}{
title={Fire growth in grassland fuels},
author={Cheney, N. P.},
auhtor={Gould, J. S.}, date={1995}, journal={Intern. J. Wildland Fire}, number={5}, pages={237--247},
doi={10.1071/WF9950237},}

\bib{Cruz2019}{article}{
author={Cruz, Miguel G.},
author={Alexander, Martin E.},
date={2019},
title={The 10\% wind speed rule of thumb for estimating a wildfire's forward rate of spread in forests and shrublands},
Journal={Ann. Forest Science},
pages={1--11},
Volume={76},
URL={https://doi.org/10.1007/s13595-019-0829-8},
DOI={10.1007/s13595-019-0829-8},}

\bib{PAPER1}{article}{
   author={Dipierro, Serena},
   author={Valdinoci, Enrico},
   author={Wheeler, Glen},
   author={Wheeler, Valentina-Mira},
   title={A simple but effective bushfire model: analysis and real-time
   simulations},
   journal={SIAM J. Appl. Math.},
   volume={84},
   date={2024},
   number={4},
   pages={1504--1514},
   issn={0036-1399},
   review={\MR{4772545}},
   doi={10.1137/24M1644596},
}

\bib{MR4968074}{article}{
   author={Dipierro, Serena},
   author={Valdinoci, Enrico},
   author={Wheeler, Glen},
   author={Wheeler, Valentina-Mira},
   title={Existence theory for a bushfire equation},
   journal={J. Differential Equations},
   volume={452},
   date={2026},
   pages={Paper No. 113821, 31},
   issn={0022-0396},
   review={\MR{4968074}},
   doi={10.1016/j.jde.2025.113821},
}

\bib{MR1625845}{book}{
   author={Evans, Lawrence C.},
   title={Partial differential equations},
   series={Graduate Studies in Mathematics},
   volume={19},
   publisher={American Mathematical Society, Providence, RI},
   date={1998},
   pages={xviii+662},
   isbn={0-8218-0772-2},
   review={\MR{1625845}},
   doi={10.1090/gsm/019},
}

\bib{MR181836}{book}{
   author={Friedman, Avner},
   title={Partial differential equations of parabolic type},
   publisher={Prentice-Hall, Inc., Englewood Cliffs, NJ},
   date={1964},
   pages={xiv+347},
   review={\MR{181836}},
}

\bib{MR610244}{book}{
   author={Henry, Daniel},
   title={Geometric theory of semilinear parabolic equations},
   series={Lecture Notes in Mathematics},
   volume={840},
   publisher={Springer-Verlag, Berlin-New York},
   date={1981},
   pages={iv+348},
   isbn={3-540-10557-3},
   review={\MR{610244}},
}

\bib{MORVAN2011469}{article}{
title = {Numerical simulation of the interaction between two fire fronts in grassland and shrubland},
journal = {Fire Safety J.},
volume = {46},
number = {8},
pages = {469-479},
date = {2011},
issn = {0379-7112},
doi = {10.1016/j.firesaf.2011.07.008},
url = {https://www.sciencedirect.com/science/article/pii/S0379711211001068},
author = {Morvan, Dominique},
author = {Hoffman, Chad},
author={Rego, Francisco},
author={Mell, William},
}

\bib{MR710486}{book}{
   author={Pazy, A.},
   title={Semigroups of linear operators and applications to partial
   differential equations},
   series={Applied Mathematical Sciences},
   volume={44},
   publisher={Springer-Verlag, New York},
   date={1983},
   pages={viii+279},
   isbn={0-387-90845-5},
   review={\MR{710486}},
   doi={10.1007/978-1-4612-5561-1},
}

\end{biblist}
\end{bibdiv}
\vfill

\end{document}